\crefname{equation}{}{}
\pgfplotsset{compat = newest}
\tikzset{>={Latex[width=3mm,length=3mm]}}
\tikzstyle{B1} = [rectangle, rounded corners, minimum width=2.2cm, minimum height=1.4cm,text centered, draw=black, fill=MidnightBlue!3, text width=3.9cm]
\tikzstyle{B11} = [rectangle, rounded corners, minimum width=2.2cm, minimum height=1.4cm,text centered, draw=black, fill=MidnightBlue!3, text width=9cm]
\tikzstyle{B0} = [rectangle, rounded corners, minimum width=2.2cm, minimum height=1.4cm,text centered, draw=black, fill=MidnightBlue!3, text width=8.8cm]
\tikzstyle{B00} = [rectangle, rounded corners, minimum width=2.2cm, minimum height=1.4cm,text centered, draw=black, fill=MidnightBlue!3, text width=11.6cm]
\tikzstyle{B000} = [rectangle, rounded corners, minimum width=2.2cm, minimum height=1.4cm,text centered, draw=black, fill=MidnightBlue!3, text width=5cm]
\tikzstyle{blue0} = [rectangle, rounded corners, minimum width=2.2cm, minimum height=1.4cm,text centered, draw=black, fill=MidnightBlue!3, text width=2.1cm]
\tikzstyle{blue} = [rectangle, rounded corners, minimum width=3.6cm, minimum height=1.4cm,text centered, draw=black, fill=MidnightBlue!3, text width=4.5cm]
\tikzstyle{blue1} = [rectangle, rounded corners, minimum width=6cm, minimum height=1.4cm,text centered, draw=black, fill=MidnightBlue!3, text width=6.2cm]
\tikzstyle{arrow} = [thick,->]
\numberwithin{equation}{section}
\theoremstyle{plain}
\newtheorem{thrm}{Theorem}[section]
\newtheorem{lmm}[thrm]{Lemma}
\newtheorem{crllr}[thrm]{Corollary}
\newtheorem{prpstn}[thrm]{Proposition}
\theoremstyle{definition}
\newtheorem{dfntn}[thrm]{Definition}
\newtheorem{rmrk}[thrm]{Remark}
\theoremstyle{plain}
\newcommand{\xvec}[1]{\bm{#1}}
\newcommand{\xsym}[1]{\bm{#1}}
\newcommand{\xdop}[1]{\bm{\mathrm{#1}}}
\def\xnab{\xdop{\nabla}}
\newcommand{\xwcurl}[1]{\xdop{\nabla}\wedge{#1}}
\newcommand{\xdiv}[1]{\xdop{\nabla}\cdot{#1}}
\newcommand{\xmcal}[1]{\bm{\mathcal{#1}}}
\newcommand{\xdx}[1]{{{\rm d}#1}}
\def\xdrv#1#2{\frac{{\rm d}#1}{{\rm d}#2}}
\def\cf{\emph{cf.\/}}\def\eg{\emph{e.g.\/}}\def\Eg{\emph{E.g.\/}}
\def\Apriori{\emph{A priori\/}}
\def\@Rref#1{\hbox{\rm \ref{#1}}}
\def\Rref#1{\@Rref{#1}}
\def\xCzero{{\rm C}^{0}}
\def\xCone{{\rm C}^{1}} 
\def\xCinfty{{\rm C}^{\infty}} 
\def\xCn#1{{\rm C}^#1}
\def\xH{{\rm H}}
\def\xHn#1{{\rm H}^#1}
\def\bHn#1{\mathbf{H}^#1}
\def\bVn#1{\mathbf{V}^#1}
\def\xWn#1{{\rm W}^#1}
\def\xLone{{\rm L}^{1}}
\def\xLtwo{{\rm L}^{2}}
\def\xLn#1{{\rm L}^#1}
\def\xB{{\rm B}}
\def\xI{{\rm I}}
\newcommand*{\toccontents}{\@starttoc{toc}}
\begin{document}\linespread{1.05}\selectfont
	\date{}
	
	\author{Vahagn~Nersesyan\,\protect\footnote{NYU-ECNU Institute of Mathematical Sciences at NYU Shanghai, 3663 Zhongshan Road North, Shanghai, 200062, China, e-mail: \href{mailto:vahagn.nersesyan@nyu.edu}{Vahagn.Nersesyan@nyu.edu}}\and
		Manuel~Rissel\,\protect\footnote{NYU-ECNU Institute of Mathematical Sciences at NYU Shanghai, 3663 Zhongshan Road North, Shanghai, 200062, China, e-mail: \href{mailto:Manuel.Rissel@nyu.edu}{Manuel.Rissel@nyu.edu}}}
	
	\title{Global controllability of Boussinesq flows by using only a temperature control}

	\maketitle
	
	\begin{abstract}
		We show that buoyancy driven flows can be steered in an arbitrary time towards any state by applying as control only an external temperature profile in a subset of small area. More specifically, we prove that the $2$D incompressible Boussinesq system on the torus is globally approximately controllable via physically localized heating or cooling. In addition, our controls have an explicitly prescribed structure; even without such structural requirements, large data controllability results for Boussinesq flows driven merely by a physically localized temperature profile were so far unknown. The presented method exploits various connections between the model's underlying transport-, coupling-, and scaling mechanisms.
		
		\quad
		
		\begin{center}
			\textbf{Keywords} \\ Boussinesq system, incompressible fluids,  controllability, global approximate controllability, decomposable controls, transported Fourier modes
			\\
			\quad
			\\
			{\bf MSC2020} \\ 35Q30, 35Q49, 76B75, 80A19, 93B05, 93C10
		\end{center}
	
		\quad

	\end{abstract}

	\section{Introduction}
	We demonstrate the global approximate controllability of incompressible buoyancy-driven flows regulated only by a physically localized temperature control; \enquote{global} means in this context that the initial and target profiles might be far apart from each other in the state space. The considered incompressible Boussinesq system is relevant to the study of, \eg, geophysical phenomena and Rayleigh-B\'enard convection, and it also serves applications involving heating and ventilation (\cf~\cite{AbergelTemam1990,Getling1998,Tritton1977,FoiasManleyTemam1987}). In particular, it is desirable to uncover coupling mechanisms that facilitate the controllability of nonlinear fluids merely via regionally applied heating/cooling, without imposing smallness constraints on the data. This can matter from practical perspectives and provides deeper theoretical insights regarding the mathematical model itself.~But despite the variety of motivations, all available controllability results for the Boussinesq equations steered only by a temperature control have been limited to small perturbations of linear dynamics in~$2$D; see \Cref{subsection:literaturecontr} for bibliographical remarks. In this article, contrasting the existing literature, we tackle~a truly global controllability problem for the nonlinear Boussinesq system. The present approach even allows to fix the control's structure in terms of a small finite number of universal physically localized profiles. Our proof features geometric arguments,~a multi-stage scaling procedure, and the notion of transported Fourier modes from~\cite{NersesyanRissel2024}.
	
	\subsection{The main controllability problem}\label{subsection:maincontrollabilityproblem}
	Let $T > 0$ and assume that the gravitational field is given by $\xvec{e}_{\operatorname{grav}} \coloneq (0,1)$. The state of a viscous incompressible fluid in $\mathbb{T}^2 \coloneq \mathbb{R}^2 / 2\pi \mathbb{Z}^2$ is then described by means of its $2\pi$-periodic velocity, temperature, and exerted pressure; respectively,
	\[
		\xvec{u}\colon\mathbb{T}^2\times[0,T] \longrightarrow \mathbb{R}^2, \quad \theta\colon\mathbb{T}^2\times[0,T] \longrightarrow \mathbb{R}, \quad p \colon\mathbb{T}^2\times[0,T] \longrightarrow \mathbb{R},
	\]
	which are governed by the Boussinesq system
	\begin{equation}\label{equation:BoussinesqVelocity}
		\begin{gathered}
			\partial_t \xvec{u} - \nu \Delta \xvec{u} + \left(\xvec{u} \cdot \xnab\right) \xvec{u} + \xnab p = \theta \xvec{e}_{\operatorname{grav}} + \xsym{\Phi}_{\operatorname{ext}}, \quad \xdiv{\xvec{u}} = 0, \quad \xvec{u}(\cdot, 0) = \xvec{u}_0, \\ \partial_t \theta - \tau \Delta \theta + (\xvec{u}\cdot\xnab)\theta = \mathbb{I}_{\omegaup} \eta + \psi_{\operatorname{ext}}, \quad \theta(\cdot, 0) = \theta_0,
		\end{gathered}
	\end{equation}
	where~$\nu > 0$ is the viscosity and~$\tau > 0$ denotes the thermal diffusivity. Moreover, the functions~$\xsym{\Phi}_{\operatorname{ext}}$ and~$\psi_{\operatorname{ext}}$ represent given external forces, which are for simplicity assumed to have zero average. A distinguished role is played by the~unknown profile~$\eta$ in~\eqref{equation:BoussinesqVelocity}; it will act as the control, and its spatial support will be localized in an arbitrarily thin horizontal strip (\cf~\Cref{Figure:controldomain})
	\[
		\omegaup \coloneq \mathbb{T}\times (a, b), \quad 0 < a < b \leq 2\pi.
	\]
	
	\begin{figure}[ht!]
		\centering
		\resizebox{0.45\textwidth}{!}{
			\begin{tikzpicture}
				\clip(-0.05,-0.05) rectangle (9.05,9.05);

				\fill[line width=0.3mm, fill=CornflowerBlue!50] plot[smooth cycle] (0, 1.6) rectangle ++(9,2.5);

				\draw[line width=0.2mm, color=black, dashed, dash pattern=on 8pt off 8pt] plot[smooth cycle] (0,0) rectangle ++(9,9);
				
				\draw[line width = 0.8mm, ->] (2,5.5) -- (2, 8);
				
				\coordinate[label=right:\LARGE$\omegaup$] (A) at (4.1, 2.8);
				\coordinate[label=right:\LARGE$\xvec{e}_{\operatorname{grav}}$] (A) at (2.1, 6.75);
				
		\end{tikzpicture}}
		\caption{The control region $\omegaup \subset \mathbb{T}^2$ is any open horizontal strip and~$\xvec{e}_{\operatorname{grav}}$ points vertically.}
		\label{Figure:controldomain}
	\end{figure}
	
	Given any initial state $(\xvec{u}_0, \theta_0)$, target state $(\xvec{u}_T, \theta_T)$, control time~$T > 0$, and approximation error~$\varepsilon > 0$, we show (\cf~\Cref{theorem:main}) that there exists a temperature control~$\eta$ supported in $\omegaup$ such that the corresponding solution~$(\xvec{u}, \theta)$ to \eqref{equation:BoussinesqVelocity} approaches the target at time $t = T$ with respect to a Sobolev norm~$\|\cdot\|$; that is,
	\begin{equation}\label{equation:introtargetcondition}
		\|\xvec{u}(\cdot, T) - \xvec{u}_T\| + \|\theta(\cdot, T) - \theta_T\|  < \varepsilon.
	\end{equation}
	
	\subsection{Highly degenerate controllability problems}\label{subsection:deg} Our goal described above will be achieved as by-product of studying a more degenerate situation. Hereto, let us consider the Boussinesq problem with an additional control acting parallel to gravitation on the velocity:
	\begin{equation}\label{equation:BoussinesqVelocity_vc}
		\begin{gathered}
			\partial_t \xvec{u} - \nu \Delta \xvec{u} + \left(\xvec{u} \cdot \xnab\right) \xvec{u} + \xnab p = (\theta + \mathbb{I}_{\omegaup}\overline{\eta}) \xvec{e}_{\operatorname{grav}} + \xsym{\Phi}_{\operatorname{ext}}, \! \quad \xdiv{\xvec{u}} = 0, \! \quad \xvec{u}(\cdot, 0) = \xvec{u}_0, \\ \partial_t \theta - \tau \Delta \theta + (\xvec{u}\cdot\xnab)\theta = \mathbb{I}_{\omegaup} \eta + \psi_{\operatorname{ext}}, \quad \theta(\cdot, 0) = \theta_0.
		\end{gathered}
	\end{equation}
	We will prove the approximate controllability of \eqref{equation:BoussinesqVelocity_vc} by means of highly degenerate controls~$\eta$ and~$\overline{\eta}$. Hereto, we are going to describe a small number of universally fixed profiles (actuators) $\zeta_1, \dots, \zeta_{6} \in \xLtwo((0,1); \xCinfty(\mathbb{T}^2; \mathbb{R}))$ that vanish away from $\omegaup$. Then, we will seek~$\eta$ and~$\overline{\eta}$ of the form
	\begin{equation}\label{equation:degeneratecontrol}
		\eta(\xvec{x}, t) = \sum_{l = 1}^{6} \gamma_l(t) \zeta_l(\xvec{x}, \gamma(t)), \quad \overline{\eta}(x_1, x_2, t) = \overline{\gamma}(t) \zeta_1(x_2),
	\end{equation}
	where the unknown parameters $\gamma, \overline{\gamma}, \gamma_1, \dots, \gamma_{6} \in \xLtwo((0,T); \mathbb{R})$ resemble the actual controls, and where we use the notation $\zeta_l(x,\gamma(t)) = \zeta_l(\gamma(t))(x)$. We call controls that have the form \eqref{equation:degeneratecontrol} \enquote{finitely decomposable}.
	It is hereby important that the profiles $\zeta_1, \dots, \zeta_{6}$ in \eqref{equation:degeneratecontrol} do not depend on the initial and target states, the viscosity, the thermal diffusivity, the external forces, or the approximation error. Thus, solving this degenerate controllability problem means showing the existence of parameters~$\overline{\gamma}, \gamma_1, \dots, \gamma_{6}$, and~$\gamma$ in the representation \eqref{equation:degeneratecontrol} such that the solution to \eqref{equation:BoussinesqVelocity_vc} satisfies~\eqref{equation:introtargetcondition}. In fact,~$\overline{\gamma}$ will be smooth and satisfy $\operatorname{supp}(\overline{\gamma}) \subset (0, T)$, and it will be possible to take~$\zeta_1$ and~$\zeta_2$ as a smooth single variable functions with $\zeta_2 = \zeta_1'$.  
	
	Moreover, as detailed in \Cref{subsection:conclusion}, the transformation $\theta(x_1,x_2,t) \mapsto \theta(x_1,x_2,t) + \overline{\gamma}(t) \zeta_1(x_2)$ will allow us to
	interchange the controls in \eqref{equation:degeneratecontrol} with
	\begin{equation}\label{equation:degeneratecontrol2}
		\begin{aligned}
				\eta(x_1, x_2, t) & =  \left(\overline{\gamma}'(t) + \gamma_1(t)\right) \zeta_1(x_2)  + \left(\overline{\gamma}(t)\xvec{u}(x_1,x_2, t) \cdot \xsym{e}_{\operatorname{grav}}  + \gamma_2(t)\right) \zeta_2(x_2) \\ 
				& \quad - \tau \overline{\gamma}(t) \zeta_1''(x_2) + \sum_{l = 3}^{6} \gamma_l(t) \zeta_l(x_1, x_2, \gamma(t)),\\
			\overline{\eta}(x_1, x_2, t) & = 0.
		\end{aligned}
	\end{equation}
	Hence, global approximate controllability of \eqref{equation:BoussinesqVelocity_vc} will also be achieved by using merely a finitely decomposable temperature control plus an explicit feedback term.
	More specifically, in \eqref{equation:degeneratecontrol2}, the map~$\xvec{u} \mapsto \overline{\gamma} \zeta_2 \xvec{u} \cdot \xsym{e}_{\operatorname{grav}} $ can be viewed as an explicitly given linear feedback law for the coefficient of the actuator~$\zeta_2$. The main controllability problem described in \Cref{subsection:maincontrollabilityproblem} can be solved by finding a control~$\eta$ of the form stated in \eqref{equation:degeneratecontrol2} for the Boussinesq system~\eqref{equation:BoussinesqVelocity_vc}. 
	
	\begin{rmrk}\label{remark:sidenote}
		The global approximate controllability of \eqref{equation:BoussinesqVelocity} can also be achieved by using merely a finitely decomposable temperature control of the form
		\begin{equation}\label{equation:altnonl}
			\eta(\xvec{x},t) = \overline{\gamma} + \mathbb{I}_{\omegaup}(\gamma_1(t) \zeta_1(\xvec{x}, \gamma(t)) + \dots + \gamma_{6}(t) \zeta_{6}(\xvec{x}, \gamma(t))).
		\end{equation}
		While the first actuator in this representation is not physically localized, this still extends the existing literature on degenerate controls for incompressible fluids. More details are given in \Cref{subsection:conclusion}.
	\end{rmrk}

	Another interest of our work is to relax a topological constraint previously imposed on the control region in~\cite{NersesyanRissel2024}, where the two-dimensional Navier--Stokes system with physically localized finitely decomposable controls is considered. There, in order to act on the velocity-average, the control zone is required to contain two cuts rendering the torus simply-connected (\eg, the union of two strips with linearly independent direction vectors).~In the present article, by exploiting the buoyant force coupling the velocity and temperature in the momentum equation, we are able to take~$\omegaup$ merely as a horizontal strip. This observation indicates that heat effects in the mathematical model might improve certain controllability properties.

	\subsection{Notations} 
	Given any integer $m \geq 0$, several basic $\xLtwo$-based Sobolev spaces of zero average functions and divergence-free vector fields are denoted by
	\begin{equation*}
		\begin{gathered}
			\xH_{\operatorname{avg}} \coloneq \left\{ f \in \xLtwo(\mathbb{T}^2;\mathbb{R}) \, \left| \right. \, \int_{\mathbb{T}^2} \! f(\xvec{x}) \, \xdx{\xvec{x}} = 0 \right\}, \,\, \mathbf{H}_{\operatorname{div}} \coloneq \left\{ \xvec{f} \in \xLtwo(\mathbb{T}^2;\mathbb{R}^2) \, \left| \right. \, \xdiv{\xvec{f}} = 0 \right\},\\
			\bVn{m} \coloneq \xHn{m}(\mathbb{T}^2;\mathbb{R}^2) \cap \mathbf{H}_{\operatorname{div}}, \quad 	\bHn{m} \coloneq \bVn{m} \cap \xH_{\operatorname{avg}}^2, \quad \xHn{m} \coloneq \xHn{m}(\mathbb{T}^2;\mathbb{R}) \cap \xH_{\operatorname{avg}},
		\end{gathered}
	\end{equation*}
	endowed with the norms
	\begin{equation*}
		\|\cdot\|_m \coloneq \sqrt{\sum_{|\xsym{\alpha}| \leq m} \| \partial^{\xsym{\alpha}} \cdot\|_0^2}, \quad \|\cdot\|_0 \coloneq \mbox{ either } \|\cdot\|_{\xLtwo(\mathbb{T}^2;\mathbb{R})} \mbox{ or } \|\cdot\|_{\xLtwo(\mathbb{T}^2;\mathbb{R}^2)},
	\end{equation*}
	and the Lebesgue measure on the torus is assumed normalized: $\smallint_{\mathbb{T}^2} \, \xdx{\xvec{x}} = 1$.
	The symbol~$\mathscr{O}$ refers to the Landau-big-O notation. If not specified otherwise, constants of the form $C > 0$ are generic and may vary during estimates. If $\xvec{x} \in \mathbb{R}^2$ or $\xvec{x} \in \mathbb{T}^2$, we denote $\xvec{x} = (x_1,x_2)$. For a function $J \ni t \mapsto X$, where $X$ is a space of functions of $x$, we occasionally use the notation $f(x,t) = f(t)(x)$.

	\subsection{Main results}\label{subsection:mainresult}

	Throughout, we will take the actuators from a universally fixed collection of so-called transported Fourier modes $\zeta_1, \dots, \zeta_{6} \in \xLtwo((0,1); \xCinfty(\mathbb{T}^2; \mathbb{R}))$ as the building blocks for the controls described in \eqref{equation:degeneratecontrol} and \eqref{equation:degeneratecontrol2}. They will be determined during the proof of \Cref{theorem:localized} in a way only depending on~$\omegaup$ (\Cref{Figure:choices}), and their name is due to the involved composition of usual Fourier modes with certain flow maps. More specifically,~(\cf~\cref{equation:ctrlstrcaux,equation:T})
	\[
		\{\zeta_1, \dots, \zeta_{6}\} = \left\{ \chi, \chi' \right\} \cup \left\{(\xvec{x}, t) \mapsto \chi(\xvec{x}) \widetilde{\zeta}(\xsym{\mathcal{U}}(\xsym{\mathcal{Y}}(\xvec{x}, t, 1),1,\sigma(t))) \, \left| \right. \, \widetilde{\zeta} \in \mathscr{M} \right\},
	\]
	where
	\begin{itemize}
		\item $\mathscr{M} = \left\{\xvec{x} \mapsto \sin(x_1), \, \xvec{x} \mapsto \cos(x_1), \, \xvec{x} \mapsto\sin(x_2), \, \xvec{x} \mapsto\cos(x_2) \right\}$,
		\item $\xsym{\mathcal{Y}}$ and $\xsym{\mathcal{U}}$ are the flow maps of incompressible vector fields introduced in Sections~\Rref{subsection:convection}~and~\Rref{subsection:generatingvectorfield},
		\item $\sigma\colon [0,1] \longrightarrow [0,1]$ is the function defined in \eqref{equation:eqfsig},
		\item $\xvec{x} \mapsto\chi(x_2)$ is a smooth cutoff supported in $\omegaup$ introduced in \Cref{section:preliminaries}.
	\end{itemize}

	\begin{figure}[ht!]\centering\resizebox{0.98\textwidth}{!}{\centering
		\begin{tikzpicture}[node distance=1.2cm]
			\node (s) [blue0] {Fix $\omegaup \subset \mathbb{T}^2$};
			
			\node (c) [blue0, right = 0.6cm of s] {Construct $\zeta_1, \dots, \zeta_{6}$};
			
			\node (p) [blue, right = 0.6cm of c] {Select $\nu > 0$, $\tau > 0$, $T > 0$, $\varepsilon > 0$, $\xsym{\Phi}_{\operatorname{ext}}$, $\psi_{\operatorname{ext}}$, states $(\xvec{u}_0, \theta_0)$, $(\xvec{u}_T, \theta_T)$};
			
			\node (r) [blue, right = 0.6cm of p] {Obtain $\gamma, \overline{\gamma}, \gamma_1,\dots,\gamma_{6}$.};

			\draw [arrow,line width=0.6mm] (s) -- (c);
			\draw [arrow,line width=0.6mm] (c) -- (p);
			\draw [arrow,line width=0.6mm] (p) -- (r);
			
		\end{tikzpicture}}
		\caption{The order in which the building blocks for our control force are chosen.}
		\label{Figure:choices}
	\end{figure}

	The global approximate controllability of~\eqref{equation:BoussinesqVelocity} by means of a physically localized temperature control is now stated as follows; its proof is concluded in \Cref{subsection:conclusion}.
	
	\begin{thrm}\label{theorem:main}
		Let the integer $r\geq 0$, viscosity $\nu > 0$, diffusivity $\tau > 0$, control time $T~>~0$, initial and target states $(\xvec{u}_0, \xvec{u}_T) \in \bHn{{r}}\times\bHn{{r}}$, $(\theta_0, \theta_T) \in \xHn{{r}}\times\xHn{{r}}$, external forces $(\xsym{\Phi}_{\operatorname{ext}}, \psi_{\operatorname{ext}}) \in \xLtwo((0, T); \bHn{{\max\{r,2\}}} \times \xHn{{\max\{r,2\}}})$, and the approximation error $\varepsilon > 0$ be fixed. There exists a control $\eta \in \xCinfty(\mathbb{T}^2\times[0,T];\mathbb{R})$ with $\operatorname{supp}(\eta) \subset \omegaup\times[0,T]$ such that the unique solution
		\begin{gather*}
			\xvec{u} \in \xCzero([0,T];\bVn{{r}})\cap\xLtwo((0,T);\bVn{{r+1}}),\\
			\theta \in \xCzero([0,T];\xHn{{r}}(\mathbb{T}^2; \mathbb{R}))\cap\xLtwo((0,T);\xHn{{r+1}}(\mathbb{T}^2; \mathbb{R}))
		\end{gather*}
		to the Boussinesq problem \eqref{equation:BoussinesqVelocity} satisfies
		\begin{equation}\label{equation:controllabilityproperty}
				\|\xvec{u}(\cdot, T) - \xvec{u}_T\|_{r} + \|\theta(\cdot, T) - \theta_T\|_{r}  < \varepsilon.
		\end{equation}
	\end{thrm}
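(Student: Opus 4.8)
The plan is to deduce \Cref{theorem:main} from the controllability of the \emph{augmented} system \eqref{equation:BoussinesqVelocity_vc}, where one additionally disposes of the buoyancy-direction velocity control $\overline{\eta}$, and then to remove $\overline{\eta}$ by a gauge-type change of the temperature unknown. Thus I would first establish---this is the technical core, carried out in \Cref{theorem:localized}---that for \eqref{equation:BoussinesqVelocity_vc} there exist finitely decomposable controls $\eta,\overline{\eta}$ of the form \eqref{equation:degeneratecontrol}, supported in $\omegaup$, steering the solution to within $\varepsilon$ of the target as in \eqref{equation:introtargetcondition_vc}. With such controls in hand, and recalling $\overline{\eta} = \overline{\gamma}(t)\zeta_1(x_2)$ with $\operatorname{supp}(\overline{\gamma})\subset(0,T)$, I would pass from \eqref{equation:BoussinesqVelocity_vc} back to \eqref{equation:BoussinesqVelocity}.

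The removal of $\overline{\eta}$ rests on the substitution $\widetilde{\theta} \coloneq \theta + \overline{\gamma}(t)\zeta_1(x_2)$. Since $\zeta_1 = \widetilde{\chi}$ is supported in $\omegaup$, we have $\mathbb{I}_{\omegaup}\overline{\eta} = \overline{\gamma}\zeta_1$, so the buoyant force becomes $(\theta + \mathbb{I}_{\omegaup}\overline{\eta})\xvec{e}_{\operatorname{grav}} = \widetilde{\theta}\,\xvec{e}_{\operatorname{grav}}$ and the momentum equation of \eqref{equation:BoussinesqVelocity_vc} turns into that of \eqref{equation:BoussinesqVelocity} for $(\xvec{u},\widetilde{\theta})$ with no velocity control. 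Because $\zeta_1$ depends only on $x_2$, one computes $\Delta\zeta_1 = \zeta_1''$ and $(\xvec{u}\cdot\xnab)\zeta_1 = u_2\zeta_1'$, so the heat equation picks up exactly the explicit source
\[
\widehat{\eta} \coloneq \eta + \overline{\gamma}'\zeta_1 + u_2\,\overline{\gamma}\,\zeta_1' - \tau\,\overline{\gamma}\,\zeta_1'' ,
\]
which, using $\zeta_2 = \zeta_1'$, is precisely the localized control \eqref{equation:degeneratecontrol2}. Every summand is supported in $\omegaup$, whence $\operatorname{supp}(\widehat{\eta})\subset\omegaup$; the term $u_2\overline{\gamma}\zeta_1'$ is linear in the (unchanged) velocity, so the coupled system for $(\xvec{u},\widetilde{\theta})$ stays well posed, and parabolic smoothing together with the regularity of the data and of the decomposable controls yields the claimed function-space memberships of $\widehat{\eta}$ and of $(\xvec{u},\widetilde{\theta})$.

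The change of variables preserves the target: since $\overline{\gamma}(T)=0$ we have $\widetilde{\theta}(\cdot,T)=\theta(\cdot,T)$, while $\xvec{u}$ is untouched, so \eqref{equation:introtargetcondition_vc} transfers verbatim into \eqref{equation:controllabilityproperty}; renaming $\widetilde{\theta}\mapsto\theta$ and $\widehat{\eta}\mapsto\eta$ finishes the deduction. The genuine difficulty lies entirely in \Cref{theorem:localized}. There I would concentrate all control action in a short window near $t=T$, rescale time and amplitude through the multi-stage procedure encoded by $\gamma$ and $\sigma$ so that the flow is dominated by transport along $\xsym{\mathcal{Y}}$ and $\xsym{\mathcal{U}}$ rather than by viscosity and diffusion, and then realize the required low Fourier modes by means of the transported modes in $\mathcal{T}$. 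I expect the hardest step to be the generation of horizontal momentum---that is, control of the velocity average---using only the horizontal strip $\omegaup$: this forces one to exploit the buoyant coupling $\theta\xvec{e}_{\operatorname{grav}}$ geometrically, in place of the second cut that was indispensable in the purely hydrodynamic setting of~\cite{NersesyanRissel2022}, all the while keeping the viscous, diffusive, and nonlinear remainders negligible in the scaling limit.
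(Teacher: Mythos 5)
Your proposal is correct and follows essentially the same route as the paper: the paper's proof of \Cref{theorem:main} (in \Cref{subsection:conclusion}, ``Option 1'') performs exactly your gauge substitution $\theta \mapsto \theta + \overline{\gamma}(t)\widetilde{\chi}(x_2)$ with $\overline{\gamma}=\aleph'$, arriving at the same localized control \eqref{equation:degeneratecontrol2}, and likewise defers the real work to the controllability of the augmented/vorticity system (\Cref{theorem:control_vorttemp}, resting on \Cref{theorem:main_smalltimelargecontrol,proposition:propvortcontrl,theorem:localized}). The only slight inaccuracy is your closing speculation about generating \emph{horizontal} momentum: the return-method drift $\overline{\xvec{y}}$ is purely vertical and the targets in $\bHn{r}$ are average-free, so the buoyant coupling is exploited to produce a vertical average velocity, not a horizontal one.
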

	
	The proof of \Cref{theorem:main} is a consequence of the following result on the global approximate controllability of the Boussinesq system via physically localized finitely decomposable controls; the proofs of both theorems are completed in \Cref{subsection:conclusion}.
	\begin{thrm}\label{theorem:secondmain}
		Under the same assumptions as in \Cref{theorem:main}, there exist control parameters~$\gamma, \overline{\gamma}, \gamma_1, \dots, \gamma_{6} \in \xLtwo((0, T); \mathbb{R})$ such that the unique solution
		\begin{gather*}
			\xvec{u} \in \xCzero([0,T];\bVn{{r}})\cap\xLtwo((0,T);\bVn{{r+1}}),\\
			\theta \in \xCzero([0,T];\xHn{{r}}(\mathbb{T}^2; \mathbb{R}))\cap\xLtwo((0,T);\xHn{{r+1}}(\mathbb{T}^2; \mathbb{R}))
		\end{gather*}
		to the problem \eqref{equation:BoussinesqVelocity_vc} with $\eta$ and $\overline{\eta}$ of the form \eqref{equation:degeneratecontrol} (or \eqref{equation:degeneratecontrol2}) satisfies~\eqref{equation:controllabilityproperty}.
	\end{thrm}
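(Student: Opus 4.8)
The plan is to reduce this global, large-data problem to a short-time, strongly forced \emph{inviscid} transport problem, to solve the latter by a saturation argument built on the transported Fourier modes $\zeta_1,\dots,\zeta_{14}$, and finally to recover the true viscous system by a perturbation estimate. The multi-stage scaling is the device that makes the first reduction work, and it is encoded in $\gamma$: after rescaling time on the short window $[T-\delta_0,T]$ by the small parameters $\delta_1,\delta_2,\delta_3$ and amplifying the control amplitudes by the compensating factors $\delta_i^{-1}$, the dissipative terms $\nu\Delta\xvec{u}$ and $\tau\Delta\theta$ and the external forces $\xsym{\Phi}_{\operatorname{ext}},\psi_{\operatorname{ext}}$ all become $\mathscr{O}(\delta_0)$ in $\bVn{r}\times\xHn{r}$, so that to leading order the window dynamics is a controlled incompressible transport--Euler system coupled through the buoyant force $\theta\,\xvec{e}_{\operatorname{grav}}$. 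It then suffices to (i)~prove approximate controllability of this inviscid model by controls of the finitely decomposable form \eqref{equation:degeneratecontrol}, and (ii)~establish uniform-in-$\delta$ high-regularity bounds ensuring that the genuine trajectory of \eqref{equation:BoussinesqVelocity_vc} remains within $\varepsilon/2$ of the inviscid one in the norm of \eqref{equation:controllabilityproperty}.

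For the inviscid model I would pass to the Lagrangian coordinates induced by the fixed reference incompressible flows $\xsym{\mathcal{Y}}$ and $\xsym{\mathcal{U}}$ of \Cref{section:preliminaries}, in which the applied profiles become exactly the transported Fourier modes: the scaling function $\sigma$ and the two ramps of $\gamma$ carry the four seed modes of $\mathcal{H}_0$ --- the functions $\sin$ and $\cos$ of $x_1$ and of $x_1+x_2$ --- along the reference flow, while the mean-subtracted $x_1$-antiderivatives and the $\partial_t,\partial_2$ variants encoded in $\mathcal{T}$ and $\widetilde{\mathcal{T}}$ supply the companion profiles needed to close the coupling in an average-free, transport-compatible way. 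The decisive coupling step is that the localized temperature control, acting through buoyancy, forces the momentum equation, while the extra velocity forcing $\overline\gamma(t)\,\zeta_1(x_2)\,\xvec{e}_{\operatorname{grav}}$ provides an explicit handle parallel to gravity. A saturation (Agrachev--Sarychev type) argument should then show that $\mathcal{H}_0$, together with the quadratic convective and buoyant interactions, generates a dense family of admissible directions in the state space, giving approximate controllability of the inviscid system. It is precisely here that a purely horizontal strip $\omegaup$ suffices: the buoyant coupling furnishes the control direction that, in the Navier--Stokes setting of \cite{NersesyanRissel2022}, required a second cut in order to act on the velocity average.

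I would then localize the construction. The saturating profiles are \emph{a priori} globally supported, so each is multiplied by the cutoffs $\chi$ or $\widetilde{\chi}$ (with $\widetilde{\chi}'=\zeta_2$) supported in $\omegaup$, which produces exactly the $14$ universal building blocks appearing in \eqref{equation:degeneratecontrol}. The commutator errors created by the cutoffs --- the terms carrying $\xnab\chi$ and the mean-subtracted antiderivative corrections built into $\mathcal{T}$ --- must be arranged to stay supported in, or to be transported into, $\omegaup$; and the equivalence between \eqref{equation:degeneratecontrol} and the single-control representation \eqref{equation:degeneratecontrol2} follows from the substitution $\theta\mapsto\theta+\overline\gamma\,\zeta_1$ together with $\zeta_2=\zeta_1'$, which is a direct computation. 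Choosing the amplitudes $\overline\gamma,\gamma_1,\dots,\gamma_{14}$ so that the inviscid model reaches within $\varepsilon/2$ of $(\xvec{u}_T,\theta_T)$, and then fixing $\delta_0$ small enough that the perturbation in (ii) falls below $\varepsilon/2$, yields \eqref{equation:controllabilityproperty}; since $\xsym{\mathcal{Y}},\xsym{\mathcal{U}},\sigma,\chi,\widetilde{\chi}$ depend only on $\omegaup$, the profiles $\zeta_1,\dots,\zeta_{14}$ remain fixed as the data vary.

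The hard part will be to couple the two reductions in the strong topology: one must steer the \emph{entire} divergence-free velocity field --- including the Fourier modes that are never forced directly --- solely through transport and the scalar buoyant coupling, while controlling only $\theta$ inside the thin strip, and simultaneously keep the localization commutators and the inviscid-limit errors small in $\bVn{r}\times\xHn{r}$ uniformly as $\delta_0\to0$. Making the saturation and Lie-bracket generation, the geometric choice of a reference flow adapted to $\omegaup$, and the high-regularity energy estimates compatible at once is the technical heart of the argument, and it is the content of \Cref{theorem:localized}, on which the whole reduction rests.
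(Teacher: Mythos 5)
Your high-level reduction (rescale time on a short window, control a limit model built from the transported Fourier modes, then perturb back) points in the right general direction, but it misses the one mechanism the paper's proof actually turns on, and the substitute you propose would not work. First, the short-time scaling limit established in \Cref{theorem:SmallTimeConvergenceToLinearized} is not an inviscid Euler--Boussinesq system: the limit dynamics is a \emph{linear} transport system along the spatially constant field $\overline{\xvec{y}}$, with the quadratic convection $(\xvec{Z}_\delta\cdot\xnab)z_\delta$ relegated to the error term $\Xi_\delta$ and shown to vanish. No saturation or Lie-bracket argument on the nonlinearity appears anywhere; the only controllability input is \Cref{theorem:ControlLinearUncoupledNonlocalized} for a linear transport equation with the generating drift $\overline{\xvec{u}}$, subsequently localized in \Cref{theorem:localized}. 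Second, and more seriously, your plan to steer velocity and temperature simultaneously in a single strongly forced window founders on the obstruction the paper states at the start of \Cref{subsection:additive}: the control never enters the vorticity equation, so the vorticity can only move by $\int\partial_1\theta\,\xdx{t}$; if $\theta$ must end up of size $O(1)$ (close to $\theta_T$), this integral is $O(\delta)$ over a window of length $\delta$, and \Cref{theorem:main_smalltimelargecontrol} can therefore only return the vorticity to its initial value while relocating the temperature.

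The missing idea is the three-stage scheme of \Cref{theorem:control_vorttemp}: first \emph{energize} the temperature to the huge intermediate profile $\widetilde\theta_0-\delta_2^{-1}\xi$ using \Cref{theorem:main_smalltimelargecontrol}; then switch the control off and coast for time $\delta_2$, during which the buoyancy term $\partial_1\theta\approx-\delta_2^{-1}\partial_1\xi$ integrates to $-\partial_1\xi$ and shifts the vorticity by the desired $O(1)$ amount (\Cref{proposition:propvortcontrl}, proved via the ansatz $Q_\delta=W_\delta-W_0+\delta^{-1}t\,\partial_1\xi$); finally re-apply \Cref{theorem:main_smalltimelargecontrol} to calm the temperature down to $\theta_T$ while leaving the vorticity essentially unchanged. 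This is precisely why $\gamma$ has two separate ramps and why four parameters $\delta_0,\dots,\delta_3$ are needed. Two further points you gloss over: the sole role of $\overline{\eta}=\aleph'(t)\widetilde{\chi}(x_2)$ is to realize the prescribed velocity average \eqref{equation:atvelav} (the return-method trajectory $\overline{\xvec{y}}$) when passing from the vorticity--temperature formulation back to \eqref{equation:BoussinesqVelocity_vc}, and the free evolution on $[0,T-\delta_0]$ combined with parabolic smoothing is what supplies the extra regularity $(\widetilde w_0,\widetilde\theta_0)\in\xHn{{m+1}}\times\xHn{{m+2}}$ that the scaling arguments require.
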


	\subsection{Related literature and outline}\label{subsection:literaturecontr}
	
	The recent decades have seen various studies concerned with controllability properties of fluids exhibiting Boussinesq heat effects.~A natural question -- which remains in most respects widely open -- is whether these systems can be steered to a desired state by merely applying external cooling or heating in a possibly small subset of the domain. Let us subdivide previous research in that or related directions into three categories.

	1) When the controls enter all the evolution equations (or boundary conditions) for the velocity and the temperature, there exists a rich body of literature. For instance, several authors have invoked linearization techniques and then studied the controllability of linear problems; \eg, see \cite{FursikovImanuvilov1998, Guerrero2006} and the references therein, where duality arguments,~Carleman estimates, and local inversion theorems play crucial roles. For the local exact controllability of the Navier--Stokes system, we refer to \cite{Fernandez-CaraGuerreroImanuvilovPuel2004} and the references therein. However, due to the nonlinear effects, the aforementioned results require initial and target profiles that are sufficiently close in the state space. As a way to remove such smallness constraints, hence to achieve global controllability properties, it was shown that Coron's return method (\cf~\cite{Coron2007,CoronFursikov1996,Coron96}) can be applied: \eg, in \cite{Chaves-SilvaEtal2023,FernandezCaraSantosSouza2016,FursikovImanuvilov1999} for both viscous and inviscid Boussinesq systems. In this context, one should also name a famous open problem posed by J.-L. Lions on the global approximate controllability of the Navier--Stokes equations, in bounded $2$D and $3$D domains, with the no-slip boundary condition (\cf~\cite{LionsJL1991,CoronMarbachSueurZhang2019}). A version of this problem has been resolved in \cite{CoronMarbachSueur2020} for the case of Navier slip-with-friction boundary conditions. Concerning the related subject of stabilization, we point out that small-time local stabilization of the $2$D Navier--Stokes system has been shown in \cite{Shengquan2023}, and small-time global stabilization has been achieved in \cite{CoronShengquan2021} for the viscous Burgers equation with three scalar controls.
	
	2) When the controls only act in few components of the considered system, less is known regarding its controllability. For the three-dimensional Navier--Stokes equations with the no-slip boundary condition, the work~\cite{CoronLissy2014} demonstrates the local exact null controllability with controls vanishing in two components; see also \cite{CoronGuerrero2009} for a prior result with one vanishing component. For the Boussinesq system in~$N$ dimensions~($N \geq 2$) in domains with boundaries, the local exact controllability to certain trajectories has been shown by using controls that act only in~$N-1$ directions, \eg, in \cite{Fernandez-CaraGuerreroImanuvilovPuel2006,Carreno2012,Montoya2020}.  In general, exact controllability with degenerate controls cannot be expected; \eg, see \cite{Shirikyan2008} for a negative result on the exact controllability of the incompressible Euler equations with finite-dimensional controls.
	
	3) Another important class of controls are those resembling finite combinations of fixed actuators. For the Navier--Stokes equations, finite-dimensional controls can be constructed via the Agrachev-Sarychev method \cite{AgrachevSarychev2006} and its refinements, for instance, as provided in \cite{Shirikyan2007,Nersesyan2021}. However, in these references, the controls are not physically localized (they act everywhere in the torus); whether their localization in space is possible constitutes an open problem due to Agrachev \cite{Agrachev2014}. To achieve also physical localization, we replaced in~\cite{NersesyanRissel2024} the notion of finite-dimensional controls by that of finitely decomposable ones, where some of the universally fixed actuators depend on time. Here, as a byproduct, we extend these results to the $2$D Boussinesq system. By exploiting the temperature coupling, this leads now to additional improvements such as reduced constraints on the control region (in \cite{NersesyanRissel2024}, the controls are not supported in a horizontal strip).
	
	This article contributes a first global (large data) controllability result for the Boussinesq system via physically localized controls acting only in the temperature. Even more, the controls can be chosen finitely decomposable (of the type \eqref{equation:degeneratecontrol}) if one admits a one-dimensional control in the second component of the velocity problem. But, up to an explicit feedback supported in $\omegaup$, we can also steer the system merely with a physically localized and finitely decomposable temperature control (\cf~\eqref{equation:degeneratecontrol2}). Moreover, we allow prescribed external forces in the right-hand sides of the Boussinesq problem. The main ingredients of our approach are as follows.
	
	a) A multi-stage scaling procedure that combines two mechanisms: i) controlling the fluid's vorticity, but ignoring the temperature;~ii) steering the temperature without influencing the vorticity much. To this end, we develop ideas from \cite{NersesyanRissel2024,BoulvardGaoNersesyan2023}, and also involve a version of Coron's return method and hydrodynamic scaling from \cite{Coron96}. 

	b) The physical localization of $\zeta_1, \dots, \zeta_{6}$ is achieved via careful rearrangements of integrals that represent solutions to transport equations; in that way, we further develop several of our ideas from \cite{NersesyanRissel2024}.

	\paragraph{Outline of the paper.} 
	As described in \Cref{section:proofmainresult}, the proofs of Theorems~\Rref{theorem:main} and~\Rref{theorem:secondmain} are reduced to controlling the vorticity, the temperature, and the average velocity.~These sub-goals are achieved by means of the following main steps (\cf~\Cref{Figure:at}). {\it A)} The temperature can be controlled without significantly changing the vorticity; see \Cref{theorem:main_smalltimelargecontrol}. Hereto, preliminary constructions are presented in \Cref{section:preliminaries}, finitely decomposable controls (possibly supported everywhere) are obtained in \Cref{section:degencontrols} for linear transport equations, a localization procedure is carried out in~\Cref{section:localizing}, and a hydrodynamic scaling is discussed in \Cref{subsection:wellposedness}. 
	{\it B)} As detailed in \Cref{theorem:propvortcontrl}, the vorticity can be controlled through well-prepared initial conditions. 
	{\it C)} The previous arguments are put together in the proof of \Cref{theorem:control_vorttemp}, which concerns a vorticity-temperature formulation for the Boussinesq system. Finally, the main results are concluded in \Cref{subsection:conclusion}.
	
	\begin{figure}[ht!]\centering\resizebox{0.75\textwidth}{!}{\centering
			\begin{tikzpicture}[node distance=0.5cm]
				\node (A) [B00] {\Large\Cref{theorem:control_vorttemp}\\ {\it (Controlling vorticity and temperature in arbitrary time)} \\ \vspace{2pt} Proof: \Cref{subsection:vortcontrlglb}};
				\node (B) [B000, above = 0.5cm of A] {\Large Theorems~\Rref{theorem:main} and~\Rref{theorem:secondmain} \\ {\it (Main results)} \\ \vspace{2pt} Proofs: \Cref{subsection:conclusion}};

				\node (AA) [below = 1.3cm of A] {};
				
				\node (C) [B0, right = 0.1cm of AA] {\Large\Cref{theorem:propvortcontrl} \\ {\it (Large initial data control, short time)} \\ \vspace{2pt} Proof: \Cref{subsection:vortcontrlglb}};
				\node (D) [B0, left = 0.1cm of AA] {\Large\Cref{theorem:main_smalltimelargecontrol} \\ {\it (Large additive control, short time)} \\ \vspace{2pt}Proof: \Cref{subsection:wellposedness}};
				
				\node (E) [B11, below = 1.3cm of AA] {\Large\Cref{theorem:localized} \\ {\it (Localized control for linear problem)} \\ \vspace{2pt} Proof: \Cref{section:localizing}};

				\draw [arrow,line width=0.5mm] (A) -- (B);
				\draw [arrow,line width=0.5mm] (D) -- (A);
				\draw [arrow,line width=0.5mm] (C) -- (A);
				\draw [arrow,line width=0.5mm] (E) -- (D);
		\end{tikzpicture}}
		\caption{Subdivision of the proofs for Theorems~\Rref{theorem:main} and~\Rref{theorem:secondmain}.}
		\label{Figure:at}
	\end{figure}

	\section{Controls for linear transport problems}
	This section concerns linear transport problems driven by finitely decomposable controls. After several preliminary constructions, we will first obtain in \Cref{section:degencontrols} finitely decomposable controls whose spatial support can be contained in any subset of $\mathbb{T}^2$. Subsequently, in \Cref{section:localizing}, we will construct finitely decomposable controls that are physically supported in~$\omegaup$.
	
	\subsection{Preliminary constructions}\label{section:preliminaries}
	This section aims to collect several definitions and constructions that are required subsequently.
	
	\subsubsection{Partition of the torus}\label{subsection:partition}
	Select $0 < H_1 < H_2 < 2\pi$ in a way that $\mathbb{T} \times [H_1, H_2] \subset \omegaup$.
	Then, a number~$K \in \mathbb{N}$ is chosen such that
	\begin{equation*}
		l_K \coloneq \frac{8\pi}{3K} <  \frac{H_2 - H_1}{3}.
	\end{equation*}
	As illustrated in \Cref{Figure:Covering}, the torus $\mathbb{T}^2$ may thus be covered by a family of overlapping strips $(\mathcal{O}_i)_{i\in\{1,\dots,K\}}$ having fixed overlap length $l_K /4$ and being translated copies of the reference strip
	\[
		\mathcal{O} \coloneq \mathbb{T} \times (H_1+l_K, H_1+2l_K) \subset \omegaup.
	\]
	For definiteness, let us take
	\[
		\mathcal{O}_i \coloneq \mathbb{T} \times \left(\frac{3(i-1)l_{K}}{4}, \frac{3(i-1)l_{K}}{4} + l_{K}
	\right), \quad i \in \{1,\dots,K\}.
	\]
	On this basis, a reference cutoff function $\chi \in \xCinfty(\mathbb{T}^2;[0,1])$ with~$\operatorname{supp}(\chi) \subset \mathcal{O}$ is specified via
	\begin{equation}\label{equation:Definition_chi}
		\chi(\xvec{x}) = \chi(x_2)  \coloneq \mu(x_2-H_1-l_K), \quad \xvec{x}\in\mathbb{T}^2,
	\end{equation}
	where $\mu \in \xCinfty(\mathbb{T};[0,1])$ satisfies
	\begin{equation}\label{equation:partitionofunity}
		\begin{gathered}
			\operatorname{supp}(\mu) \subset (0, l_K), \quad \forall x \in (0,l_K/4)\colon  \mu(x) + \mu(x + 3l_K/4) = 1,\\
			\mu(s) = 1 \iff s \in \left[l_K/4, 3l_K/4\right].
		\end{gathered}
	\end{equation}
	
	\begin{figure}[ht!]
		\centering
		\resizebox{0.55\textwidth}{!}{
			\begin{tikzpicture}
				\clip(-0.8,-0.5) rectangle (5.85,5.2);

				\draw[line width=0mm, color=white, fill=FireBrick!10] plot[smooth cycle] (0,0.9) rectangle (5,1.9);

				\draw[line width=0.1mm, color=Black, opacity=0, postaction={pattern=dots,opacity=0.8}] plot[smooth cycle] (0,0) rectangle (5,0.25);
				\draw[line width=0.1mm, color=Black, opacity=0, postaction={pattern=dots,opacity=0.8}] plot[smooth cycle] (0,4.5) rectangle (5,4.75);
				
				\draw[line width=0.1mm, color=Black] plot[smooth cycle] (0,4.5) rectangle (5,4.5);
				
				\draw[line width=0.1mm, color=Black] plot[smooth cycle] (0,0) -- (5,0);
				\draw[line width=0.1mm, dashed, color=Black] plot[smooth cycle] (0,0.25) -- (5,0.25);
				\draw[line width=0.1mm, dotted, color=Black] plot[smooth cycle] (0,0.75) -- (5,0.75);
				\draw[line width=0.1mm, color=Black] plot[smooth cycle] (0,1) -- (5,1);
				\draw[line width=0.1mm, dotted, color=Black] plot[smooth cycle] (0,1.75) -- (5,1.75);
				\draw[line width=0.1mm, dashed, color=Black] plot[smooth cycle] (0,1.5) -- (5,1.5);
				\draw[line width=0.1mm, dashed, color=Black] plot[smooth cycle] (0,2.5) -- (5,2.5);
				\draw[line width=0.1mm, color=Black] plot[smooth cycle] (0,2.25) -- (5,2.25);
				\draw[line width=0.1mm, color=Black] plot[smooth cycle] (0,3.25) -- (5,3.25);
				\draw[line width=0.1mm, dotted, color=Black] plot[smooth cycle] (0,3) -- (5,3);
				\draw[line width=0.1mm, dotted, color=Black] plot[smooth cycle] (0,4) -- (5,4);
				\draw[line width=0.1mm, dashed, color=Black] plot[smooth cycle] (0,3.75) -- (5,3.75);
				\draw[line width=0.1mm, dashed, color=Black] plot[smooth cycle] (0,4.75) -- (5,4.75);
				\draw[line width=0.1mm, color=Black] plot[smooth cycle] (0,4.5) -- (5,4.5);

				\draw [decorate,
				decoration = {calligraphic brace}, line width = 0.3mm] (-0.15,0) --  (-0.15,1);
				\coordinate[label=below:\scriptsize$\mathcal{O}_1$] (A) at (-0.4,0.73);

				\draw [decorate,
				decoration = {calligraphic brace}, line width = 0.3mm] (-0.02,0.75) --  (-0.02,1.75);
				\coordinate[label=below:\scriptsize$\mathcal{O}_2$] (A) at (-0.27,1.48);
				
				\draw [decorate,
				decoration = {calligraphic brace}, line width = 0.3mm] (-0.15, 1.5) --  (-0.15,2.5);
				\coordinate[label=below:\scriptsize$\mathcal{O}_3$] (A) at (-0.4,2.23);

				\draw [decorate,
				decoration = {calligraphic brace}, line width = 0.3mm] (5.13, 3.25) --  (5.13, 2.25);
				\coordinate[label=below:\scriptsize$\mathcal{O}_4$] (A) at (5.42, 2.98);

				\draw [decorate,
				decoration = {calligraphic brace}, line width = 0.3mm] (5.02, 4) --  (5.035, 3);
				\coordinate[label=below:\scriptsize$\mathcal{O}_5$] (A) at (5.31, 3.73);

				\draw [decorate,
				decoration = {calligraphic brace}, line width = 0.3mm] (5.13, 4.75) --  (5.13, 3.75);
				\coordinate[label=below:\scriptsize$\mathcal{O}_6$] (A) at (5.42, 4.48);

				\coordinate[label=below:\color{FireBrick}\scriptsize Reference strip $\mathcal{O}$] (B) at (2.39, 1.57);
			\end{tikzpicture}
		}
		\caption{An exemplary open covering of $\mathbb{T}^2$ by $K = 6$ overlapping strips $\mathcal{O}_1, \dots, \mathcal{O}_6$, the boundaries of which are in an alternating way depicted as solid, dashed, and dotted lines. The overlapping region due to vertical periodicity is highlighted by a dotted pattern. The reference strip $\mathcal{O}$ contained inside the control region is displayed as a~(red) filled rectangle.}
		\label{Figure:Covering}
	\end{figure}

	\subsubsection{Convection strategy}\label{subsection:convection}
	For the purpose of physically localizing the support of controls for linear transport problems in the control zone~$\omegaup$, a special spatially constant vector field is now constructed. One key property of this vector field is that its integral curves pass through~$\omegaup$ in a specific way. This profile will also be used as a reference trajectory for Coron's return method~(\cf~\Cref{theorem:SmallTimeConvergenceToLinearized} below); we already introduced similar constructions in~\cite{NersesyanRissel2024}.
	To begin with, the reference time interval~$[0,1]$ is equidistantly partitioned into subintervals  of length~$T^{\Delta} > 0$ by means of
	\begin{equation}\label{equation:timepartition}
		0 < t^0_c < t^1_a < t^1_b < t^1_c < t^2_a < t^2_b < t^2_c < \dots < t^K_a < t^K_b < t^K_c < 1,
	\end{equation}
	where $	t_c^0 = t^i_a - t^{i-1}_c = t^i_c-t^i_b = t^i_b-t^i_a = 1 - t^K_c = T^{\Delta}$
	for all $i \in \{1,\dots,K\}$.
	\begin{thrm}\label{theorem:convection}
		There exists a function $\overline{\xvec{y}} = (0, \overline{y}_2) \in \xCinfty_0((0,1); \{0\}\times\mathbb{R})$ satisfying $\operatorname{supp}(\overline{\xvec{y}}) \subset [t^0_c, t^K_c]$ and the properties
		\begin{gather*}
			\forall \xvec{x} \in \mathbb{T}^2\colon \xsym{\mathcal{Y}}(\xvec{x},0,1) = \xvec{x},\\
			\forall i \in \{1,\dots,K\},\,  \forall t \in [t^i_a, t^i_b] \colon \,\mathcal{O} = \xsym{\mathcal{Y}}(\mathcal{O}_i, 0, t) \coloneq \left\{ \xsym{\mathcal{Y}}(\xvec{x}, 0, t) \, | \, \xvec{x} \in \mathcal{O}_i \right\},
		\end{gather*}
		where $\xsym{\mathcal{Y}}$ denotes the flow of $\overline{\xvec{y}}$ obtained by solving
		\begin{equation}\label{equation:Y}
			\xdrv{\xsym{\mathcal{Y}}}{t}(\xvec{x}, s, t) = \overline{\xvec{y}}(t), \quad \xsym{\mathcal{Y}}(\xvec{x}, s, s) = \xvec{x}.
		\end{equation}
	\end{thrm}
	\begin{proof}
		The argument goes along the lines of \cite[Theorem 3.3]{NersesyanRissel2024}.
		First, a family of functions $(\beta_i)_{i\in\{1,\dots,K\}} \subset \xCinfty_0((0, T^{\Delta});\mathbb{R})$ is chosen such that
		\[
			\mathcal{O} = \mathcal{O}_i + \xvec{e}_{\operatorname{grav}}\int_0^{T^{\Delta}} \beta_i(s) \, \xdx{s} \coloneq \left\{ \xvec{x} + \xvec{e}_{\operatorname{grav}}\int_0^{T^{\Delta}} \beta_i(s) \, \xdx{s} \, \Bigg| \, \xvec{x} \in \mathcal{O}_i \right\}
		\]
		holds for each $i\in\{1,\dots,K\}$. Then, for each $i \in \{1, \dots, K\}$, an auxiliary profile $\xvec{h}_i \in \xCinfty_0((0, 3T^{\Delta});\mathbb{R}^2)$ is defined by means of
		\[
		\xvec{h}_i(t) = \begin{cases}
			\beta_i(t) \xvec{e}_{\operatorname{grav}} & \mbox{ if } t \in [0, T^{\Delta}],\\
			\xsym{0} & \mbox{ if } t \in (T^{\Delta}, 2T^{\Delta}),\\
			-\beta_i(t - 2T^{\Delta}) \xvec{e}_{\operatorname{grav}} & \mbox{ if } t \in [2T^{\Delta}, 3T^{\Delta}].
		\end{cases}
		\]
		Finally, a function $\overline{\xvec{y}}$ with the desired properties is given by
		\[
		\overline{\xvec{y}}(t) \coloneq \begin{cases}
			\xsym{0} & \mbox{ if } t \in [0, t^0_c] \cup [t^K_c, 1],\\
			\xvec{h}_i(t-(3i-2)T^{\Delta}) & \mbox{ if } t \in (t^{i-1}_c, t^i_c) \mbox{ for } i \in \{1,\dots, K\}.
		\end{cases}
		\]
	\end{proof}
	
	\subsubsection{A generating vector field}\label{subsection:generatingvectorfield}
	Let us setup some terminology that will be required later in \Cref{section:degencontrols}.~A key ingredient is the following observability notion, which has been introduced in~\cite{KuksinNersesyanShirikyan2020} for the study of mixing properties of randomly forced PDEs.
	
	\begin{dfntn}\label{definition:observablefamily}
		Given any $T > 0$ and $n \in \mathbb{N}$, a family $(\phi_j)_{j \in \{1,\dots,n\}} \subset \xLtwo((0,T);\mathbb{R})$ is said to be observable if
		\begin{gather*}
			\forall \, \xI \in  \{\mbox{subintervals of } (0,T)\}, \, \forall \, (a_j)_{j \in \{1,\dots,n\}} \subset \xCone(\xI; \mathbb{R}), \, \forall \, b \in \xCzero(\xI;\mathbb{R})\colon \\
			b + \sum_{j=1}^n a_j \phi_j = 0 \, \mbox{ in } \xLtwo(\xI;\mathbb{R}) \quad 	\Longrightarrow \quad \forall t \in \xI\colon b(t) = a_1(t) = \cdots = a_n(t) = 0.
		\end{gather*}
	\end{dfntn}
	
	\begin{rmrk}
		Observable families in the sense of \Cref{definition:observablefamily} can be constructed in an explicit way; \eg, see \cite[Section 3.3]{Nersesyan2021} or \cite{NersesyanRissel2024} for more details. 
	\end{rmrk}
	
	Let~$(\phi_l)_{l \in \{1, \dots, 4\}} \subset \xLtwo((0, 1); \mathbb{R})$ be observable, and take~$\phi \in \xCone([0, 1];\mathbb{R})$ such that~$\phi(t) = 0$ if and only if~$t = 1$.
	Furthermore, define the family
	\begin{equation*}
		(\psi_l)_{l \in \{1, \dots, 4\}} \subset \xWn{{1,2}}((0, 1);\mathbb{R}), \quad \forall l \in \{1,\dots, 4\} \colon \, \psi_{l}(t) \coloneq \phi(t) \int_0^t \phi_{l}(s) \, \xdx{s}.
	\end{equation*}
	Then, a \enquote{generating} divergence-free vector field $\overline{\xvec{u}} \in \xWn{{1,2}}((0, 1);\xCinfty(\mathbb{T}^2;\mathbb{R}^2))$ is given via (\cf~\cite[Section 3.4]{NersesyanRissel2024})
	\begin{equation*}
		\overline{\xvec{u}}(\xvec{x}, t) \coloneq  \begin{bmatrix}
			\psi_1(t) \sin(x_2) + \psi_2(t) \cos(x_2) \\ \psi_3(t) \sin(x_1) + \psi_4(t) \cos(x_1)
		\end{bmatrix},
	\end{equation*}
	and its flow $\xsym{\mathcal{U}}$ solves
	\[
		\xdrv{\xsym{\mathcal{U}}}{t}(\xvec{x}, s, t) = \overline{\xvec{u}}(\xsym{\mathcal{U}}(\xvec{x}, s, t), t), \quad \xsym{\mathcal{U}}(\xvec{x}, s, s) = \xvec{x}.
	\]
	Here, the term~\enquote{generating} expresses that such vector fields are able to induce all desired directions via finite-dimensional controls, as demonstrated in, \eg, \cite{NersesyanRissel2024, Nersesyan2021}.
	
	\subsection{Non-localized degenerate controls}\label{section:degencontrols}
  	Let $\overline{\xvec{y}}$ with flow $\xsym{\mathcal{Y}}$ and $\overline{\xvec{u}}$ with flow $\xsym{\mathcal{U}}$ be as introduced in Sections~\Rref{subsection:convection} and~\Rref{subsection:generatingvectorfield}, respectively. To begin with, we state the following result whose proof is straightforward.
    
    \begin{lmm}\label{lemma:pert}
    	Given $m \in \mathbb{N}$ and $\xvec{b} \in \xCzero([0,1]; \xCn{m}(\mathbb{T}^2; \mathbb{R}^2))$, the linear operator which associates to any prescribed force~$g \in \xLtwo((0,1); \xHn{{m}}(\mathbb{T}^2;\mathbb{R}))$ the unique solution $v \in \xCzero([0,1]; \xHn{{m}}(\mathbb{T}^2;\mathbb{R}))$ to the transport equation
    	\[
    		\partial_t v + (\xvec{b} \cdot \xnab) v = g, \quad v(\cdot, 0) = 0
    	\]
    	maps continuously from $\xLtwo((0,1); \xHn{{m}}(\mathbb{T}^2;\mathbb{R}))$ to $\xCzero([0,1]; \xHn{{m}}(\mathbb{T}^2;\mathbb{R}))$.
    \end{lmm}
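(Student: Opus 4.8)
The plan is to prove the claimed continuity by establishing the uniform a priori bound $\|v\|_{\xCzero([0,1];\xHn{l})} \le \|g\|_{\xLtwo((0,1);\xHn{l})}$ and then invoking linearity of the solution map $g \mapsto v$. The crucial structural feature to exploit is that $\overline{\xvec{y}}(t) = [0, \overline{y}_2(t)]$ depends only on time: hence the transport term is the constant-coefficient (in space) operator $(\overline{\xvec{y}}\cdot\xnab)v = \overline{y}_2(t)\partial_2 v$, and the associated flow $\xsym{\mathcal{Y}}$ is a rigid vertical translation of $\mathbb{T}^2$. Both of these facts make the otherwise delicate transport estimates elementary.

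First I would record the energy identity. For any multi-index $\xsym{\alpha}$ with $|\xsym{\alpha}| \le l$, since $\overline{y}_2$ is independent of $\xvec{x}$ it commutes with $\partial^{\xsym{\alpha}}$, so $\partial^{\xsym{\alpha}} v$ solves the same equation with force $\partial^{\xsym{\alpha}} g$ and zero initial datum. Testing against $\partial^{\xsym{\alpha}} v$ and integrating over $\mathbb{T}^2$, the transport contribution $\overline{y}_2(t)\smallint_{\mathbb{T}^2}(\partial_2 \partial^{\xsym{\alpha}} v)\,\partial^{\xsym{\alpha}} v \, \xdx{\xvec{x}}$ vanishes, because $\smallint_{\mathbb{T}^2}\partial_2\big(|\partial^{\xsym{\alpha}} v|^2\big)\,\xdx{\xvec{x}} = 0$ by periodicity. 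Summing over $|\xsym{\alpha}| \le l$ yields $\tfrac{1}{2}\tfrac{{\rm d}}{{\rm d}t}\|v\|_l^2 \le \|g\|_l \|v\|_l$, hence $\tfrac{{\rm d}}{{\rm d}t}\|v\|_l \le \|g\|_l$. Integrating in time with $v(\cdot,0) = 0$ and applying Cauchy--Schwarz gives $\|v(\cdot,t)\|_l \le \int_0^t \|g(\cdot,s)\|_l \, \xdx{s} \le \|g\|_{\xLtwo((0,1);\xHn{l})}$ for every $t \in [0,1]$; since this bound is uniform in $t$, taking the supremum produces the desired operator estimate.

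For existence, uniqueness, and genuine continuity in time I would use the explicit characteristics. Because the drift is a pure translation, one has the representation $v(\xvec{x},t) = \int_0^t g(\xsym{\mathcal{Y}}(\xvec{x},t,s),s)\,\xdx{s}$, where $\xsym{\mathcal{Y}}(\cdot,t,s)$ carries time-$s$ positions to time-$t$ positions. As translation is an isometry of $\xHn{l}(\mathbb{T}^2)$, this formula reproduces the same bound and, together with dominated convergence, shows that $t \mapsto v(\cdot,t)$ is continuous into $\xHn{l}$; uniqueness follows from the energy estimate applied to the difference of two solutions (taking $g = 0$). Linearity of $g \mapsto v$ being evident, the a priori bound is precisely the asserted continuity of the solution operator.

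This statement is essentially routine -- the paper itself flags it as straightforward -- so there is no serious obstacle. The only points requiring a little care are (i) justifying the formal energy computation rigorously, which I would do by first taking $g$ smooth and passing to the limit by density, and (ii) upgrading the $\xLinfty$-in-time bound to genuine continuity in time, which the explicit characteristic representation settles cleanly.
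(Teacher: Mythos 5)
Your proof is correct and follows exactly the route the paper intends: the paper omits the proof entirely, flagging it as straightforward, and elsewhere (in the proof of Theorem 5.3) it relies on the same characteristic representation $v(\xvec{x},t)=\int_0^t g(\xsym{\mathcal{Y}}(\xvec{x},t,s),s)\,\xdx{s}$ that you use, with the key point being that $\overline{\xvec{y}}$ is spatially constant so the flow is a rigid translation and the energy/commutation arguments are trivial. Your two flagged points of care (regularizing $g$ before the energy computation, and upgrading to genuine time-continuity via the explicit formula) are exactly the right ones, so nothing is missing.
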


    Next, a recent result from \cite{NersesyanRissel2024} is recalled concerning finite-dimensional controls for linear transport equations with a generating drift as defined above in \Cref{subsection:generatingvectorfield}. Hereby, as already anticipated in \Cref{subsection:mainresult}, we denote the four-dimensional function space
    \begin{equation}\label{equation:H0}
    	\mathcal{H}_0 = \operatorname{span}_{\mathbb{R}} \mathscr{M}, \quad \mathscr{M} = \{ \sin(x_1), \,  \cos(x_1), \, \sin(x_2), \, \cos(x_2) \}.
    \end{equation}
    The following theorem can be verified by adopting \cite[Section 2.3]{Nersesyan2021} (written there for $3$D) to the $2$D case; see \cite{NersesyanRissel2024} for specific details. Hereby, if desired, the control can be selected in continuous dependence on $(v_1, \theta_1)$ following a compactness argument as explained in \cite[Proof of Theorem 2.3]{Nersesyan2021} or \cite[Proposition 2.6]{KuksinNersesyanShirikyan2020}.
    \begin{thrm}\label{theorem:ControlLinearUncoupledNonlocalized}
    	Given any $m \in \mathbb{N}$,~$z_{1} \in \xHn{{m}}$, and $\varepsilon > 0$, there exists $g \in \xLtwo((0, 1); \mathcal{H}_0)$ such that the unique solution $z \in \xCzero([0, 1];\xHn{{m}})\cap\xWn{{1,2}}((0, 1);\xHn{{m-1}})$ to the linear transport problem
    	\begin{equation}\label{equation:fce}
    		\partial_t z + (\overline{\xvec{u}} \cdot \xnab) z = g,
    		\quad z(\cdot, 0) = 0
    	\end{equation}
    	satisfies
    	\begin{equation*}
    		\|z(\cdot, 1) - z_{1}\|_{m}  < \varepsilon.
    	\end{equation*}
    	In addition, given a bounded subset $\xB \subset \xHn{{m}}$ and any $\varepsilon > 0$, there exists a continuous linear operator~$\mathcal{C}_{\varepsilon}$ which assigns to each $z_1 \in \xB$ a control $g \in \xLtwo((0,T);\mathcal{H}_0)$ such that the corresponding solution $z$ to \eqref{equation:fce} satisfies~$\|z(\cdot, 1)-z_1\|_{m-1} < \varepsilon \|z_1\|_m$.
    \end{thrm}

	The basic intuition for the above theorem is to utilize the convection term $(\overline{\xvec{u}}\cdot\nabla)z$ and the observable structure of $\overline{\xvec{u}}$ in order to create higher frequencies from the low frequency Fourier modes that are provided by the control $g$.
    
    Now, we consider a linearized Boussinesq system driven by degenerate controls that are potentially supported everywhere in $\mathbb{T}^2$. To prepare the localization procedure carried out later in \Cref{section:localizing}, convection will now be realized along $\overline{\xvec{y}}$, and a finite family of transported Fourier modes $\{\widetilde{\zeta}_1, \dots, \widetilde{\zeta}_{4}\}\subset \xLtwo((0,1); \xCinfty(\mathbb{T}^2; \mathbb{R}))$ will be involved instead of $\mathcal{H}_0$; namely, we enumerate
    \begin{equation}\label{equation:T}
    	 \{\widetilde{\zeta}_1, \dots, \widetilde{\zeta}_{4}\} = \Big\{(\xvec{x}, t) \mapsto \widehat{\zeta}(\xsym{\mathcal{U}}(\xsym{\mathcal{Y}}(\xvec{x}, t, 1),1,t)) \, \Big| \, \widehat{\zeta} \in \mathscr{M} \Big\}.
    \end{equation}
    The definition in \eqref{equation:T} is motivated by the proof of the following theorem.

	\begin{thrm}\label{theorem:ControlLinearNonlocalized}
		For any $m \in \mathbb{N}$, $\theta_1 \in \xHn{{m}}(\mathbb{T}^2;\mathbb{R})$, and $\varepsilon > 0$, there exist control parameters $\widetilde{\alpha}_1, \dots, \widetilde{\alpha}_{4} \in \xLtwo((0,1); \mathbb{R})$
		such that the unique solution
        \begin{gather*}
            \theta \in \xCzero([0,1];\xHn{{m}}(\mathbb{T}^2;\mathbb{R}))\cap\xWn{{1,2}}((0,1);\xHn{{m-1}}(\mathbb{T}^2;\mathbb{R}))
        \end{gather*}
		to the linear problem 
		\begin{equation}\label{equation:BoussinesqLinearizedNonlocalized}
			\begin{gathered}
				\partial_t \theta + (\overline{\xvec{y}} \cdot \xnab) \theta = g \coloneq \sum_{l=1}^{4} \widetilde{\alpha}_l \widetilde{\zeta}_l, \quad \theta(\cdot, 0) = 0
			\end{gathered}
		\end{equation}
		satisfies 
		\begin{equation}\label{equation:BoussinesqNonlocalizedCond}
			\|\theta(\cdot,1)-\theta_1\|_{m} < \varepsilon
		\end{equation}
		and the control's space-time average vanishes:
		\begin{equation}\label{equation:gtimespaceaverage}
			\int_0^1 \int_{\mathbb{T}^2} g(\xvec{x}, s) \, \xdx{\xvec{x}} \xdx{s} = 0.
		\end{equation}
		Moreover, given a bounded subset $\xB \subset \xHn{{m}}(\mathbb{T}^2;\mathbb{R})$, there exists a continuous linear operator assigning to each $\theta_1 \in \xB$ a choice of $\widetilde{\alpha}_1, \dots, \widetilde{\alpha}_{4} \in \xLtwo((0,1); \mathbb{R})$ such that the solution $\theta$ to \eqref{equation:BoussinesqLinearizedNonlocalized} satisfies $\|\theta(\cdot,1)-\theta_1\|_{m-1} < \varepsilon$.
	\end{thrm}
	
	\begin{proof}
		By resorting to \Cref{theorem:ControlLinearUncoupledNonlocalized}, we select a control~$\overline{g} \in \xLtwo((0,1); \mathcal{H}_0)$ which resolves the controllability problem
		\begin{equation*}
			\partial_t \overline{\theta} + (\overline{\xvec{u}} \cdot \xnab) \overline{\theta} = \overline{g},
			\quad \overline{\theta}(\cdot, 0) = 0, \quad \|\overline{\theta}(\cdot,1) - \theta_1\|_{m}  < \varepsilon.
		\end{equation*}
		In particular, by \Cref{theorem:ControlLinearUncoupledNonlocalized} and the definition of $\mathcal{H}_0$ in \eqref{equation:H0}, there exist $\widetilde{\alpha}_1, \dots, \widetilde{\alpha}_{4} \in \xLtwo((0,1); \mathbb{R})$ such that
		\[
			\overline{g}(\xvec{x},t) = \widetilde{\alpha}_1(t) \sin(x_1) + \widetilde{\alpha}_2(t) \cos(x_1) + \widetilde{\alpha}_3(t) \sin(x_2) + \widetilde{\alpha}_4(t) \cos(x_2).
		\]
		If $\xB \subset \xHn{{m}}(\mathbb{T}^2;\mathbb{R})$ is bounded with $\theta_1 \in \xB$, we can assume that $\overline{g}$ is the image of~$\theta_1$ under a bounded linear operator $\mathcal{C}_{\varepsilon}$, as provided by \Cref{theorem:ControlLinearUncoupledNonlocalized}.
		Since~$\overline{\xvec{y}}$ is given by~\Cref{theorem:convection}, and the associated flow~$\xsym{\mathcal{Y}}$ is, like~$\xsym{\mathcal{U}}$, volume preserving, the functions
		\begin{equation*}
			\theta(\xvec{x}, t) \coloneq \int_0^t g(\xsym{\mathcal{Y}}(\xvec{x}, t, s), s) \, \xdx{s}, \quad g(\xvec{x}, t) \coloneq \overline{g}(\xsym{\mathcal{U}}(\xsym{\mathcal{Y}}(\xvec{x}, t, 1),1,t), t)
		\end{equation*}
		satisfy
		\begin{equation*}
			\partial_t \theta + (\overline{\xvec{y}} \cdot \xnab) \theta = g, 
			\quad  \theta(\cdot, 0) = 0	, \quad  \theta(\cdot, 1) = \overline{\theta}(\cdot, 1),
		\end{equation*}
		and it holds
		\[
			 \int_{\mathbb{T}^2} (g - \overline{g})(\xvec{x}, s) \, \xdx{\xvec{x}} = 0
		\]
		for almost all~$s \in [0,1]$. 
	\end{proof}

    \subsection{Localized degenerate controls}\label{section:localizing}
    The controls obtained via \Cref{theorem:ControlLinearNonlocalized} are now transformed into controls that are physically localized in~$\omegaup$. These new controls will be given in terms of $6$ fixed profiles that are independent of all data -- except the control region~$\omegaup$ -- imposed in \Cref{theorem:main}. 
    Hereto, let us recall that $\overline{\xvec{y}} = (0, \overline{y}_2)$ from \Cref{theorem:convection} only depends on time and is compactly supported in $(0,1)$.
    
     \begin{thrm}\label{theorem:localized}
    	There exist profiles $\zeta_1, \dots, \zeta_{6} \in \xLtwo((0,1); \xCinfty(\mathbb{T}^2; \mathbb{R}))$ that depend only on the control region $\omegaup$, and for which the following statement holds. Given any $m \in \mathbb{N}$, $\theta_1 \in \xHn{{m}}$, and $\varepsilon > 0$,
    	there are parameters $\alpha_{1}, \dots, \alpha_{6} \in \xLtwo((0,1); \mathbb{R})$ such that the unique solution
    	\begin{gather*}
    		\Theta \in \xCzero([0,1];\xHn{{m}})\cap\xWn{{1,2}}((0,1);\xHn{{m-1}})
    	\end{gather*}
    	to the linear problem
    	\begin{equation}\label{equation:BoussinesqLinearizedlocalized}
    		\partial_t \Theta + (\overline{\xvec{y}} \cdot \xnab) \Theta = \mathbb{I}_{\omegaup} \eta,
    		\quad \Theta(\cdot, 0) = 0,
    	\end{equation}
    	satisfies
    	\begin{equation}\label{equation:LinApproxEst}
    		\|\Theta(\cdot,1)-\theta_1\|_{m} < \varepsilon,
    	\end{equation}
	    where
	    \begin{equation}\label{equation:avrg}
	    	\eta\coloneq \sum_{l=1}^{6} \alpha_{l} \zeta_l \in \xLtwo((0,1); \xCinfty(\mathbb{T}^2; \mathbb{R})), \quad \int_{\mathbb{T}^2} \eta(\xvec{x}, \cdot) \, \xdx{\xvec{x}} = 0 \, \mbox{ a.e.}.
	    \end{equation}
    	In addition, given a bounded subset $\xB \subset \xHn{{m}}$ and $\varepsilon > 0$, there exists a continuous linear operator $\mathcal{C}_{\varepsilon}\colon \xHn{{m}} \longrightarrow \xLtwo((0,1); \mathbb{R})^6$ assigning to each $\theta_1 \in \xB$ a choice of parameters $\alpha_{1}, \dots, \alpha_{6}$ such that the solution $\Theta$ to \eqref{equation:fce} satisfies~$\|\Theta(\cdot,1)-\theta_1\|_{m-1} < \varepsilon \|\theta_1\|_m$.
    \end{thrm}
    
    \begin{proof} 
    Let $\widetilde{\alpha}_{1}, \dots, \widetilde{\alpha}_{4} \in \xLtwo((0,1); \mathbb{R})$, and the corresponding solution $\theta$ to \eqref{equation:BoussinesqLinearizedNonlocalized}, be fixed by applying \Cref{theorem:ControlLinearNonlocalized} with target temperature $\theta_1 \in \xHn{{m}}(\mathbb{T}^2;\mathbb{R})$ such that 
    \[
    	\|\theta(\cdot,1)-\theta_1\|_{m} < \varepsilon.
    \]
    In the case that $\theta_1$ is from $\xB$, we select $\widetilde{\alpha}_{1}, \dots, \widetilde{\alpha}_{4} \in \xLtwo((0,1); \mathbb{R})$ as the image of~$\theta_1$ under a bounded linear operator, while ensuring $\|\theta(\cdot,1)-\theta_1\|_{m-1} < \varepsilon \|\theta_1\|_m$.

    \paragraph{Step 1. Definition of a localized control.}
    Let us recall from \eqref{equation:timepartition} the partition of the reference time interval $(0,1)$ with uniform spacing
    \[	
    	0 < t^0_c < t^1_a < t^1_b < t^1_c < t^2_a < t^2_b < t^2_c < \dots < t^K_a <	t^K_b < t^K_c < 1.
    \]
    The force~$g \coloneq \sum_{l=1}^{4} \widetilde{\alpha}_{l} \widetilde{\zeta}_l$, obtained above via \Cref{theorem:ControlLinearNonlocalized}, is now transformed into a new control~$f$ supported in~$\omegaup$. More specifically, we define
    \begin{equation}\label{equation:eqf}
    	f(\xvec{x}, t) \coloneq \chi(x_2) \sum_{k=1}^K  \frac{1}{t_b^k-t_a^k} \mathbb{I}_{[t_a^k, t_b^k]}(t) g\left( \xsym{\mathcal{Y}}\left(\xvec{x}, t, \frac{t-t_a^k}{t_b^k-t_a^k}\right), \frac{t-t_a^k}{t_b^k-t_a^k} \right)
    \end{equation}
	and then demonstrate that the solution $\theta^{\#}$ to
	\begin{equation}\label{equation:systemctrlnonavcorf}
		\partial_t \theta^{\#} + (\overline{\xvec{y}} \cdot \xnab) \theta^{\#} = f, \quad \theta^{\#}(\cdot, 0) = 0
	\end{equation}
	satisfies $\|\theta^{\#}(\cdot,1)-\theta_1\|_{m} < \varepsilon$, or $\|\theta^{\#}(\cdot,1)-\theta_1\|_{m-1} < \varepsilon \|\theta_1\|_m$ if $\widetilde{\alpha}_{1}, \dots, \widetilde{\alpha}_{4}$ are chosen to depend continuously on $\theta_1$ from the bounded set~$\xB$.

    \paragraph{Step 2. Checking approximate controllability.}
    Since $\theta$ satisfies \eqref{equation:BoussinesqLinearizedNonlocalized}, and recalling that~$\xmcal{Y}$ is the flow associated with $\overline{\xvec{y}}$ from \Cref{theorem:convection}, one finds
    \begin{equation}\label{equation:af1}
    	\theta(\xvec{x}, 1) = \int_0^1 g(\xsym{\mathcal{Y}}(\xvec{x}, 0, r), r) \, \xdx{r}.
    \end{equation}
    Thus, in view of $f$'s definition in \eqref{equation:eqf}, the properties of~$\chi$ and~$\xsym{\mathcal{Y}}$ (\cf~\eqref{equation:Definition_chi}, \eqref{equation:partitionofunity}, and \Cref{theorem:convection}) imply
    \begin{multline}\label{equation:thotolf}
    	\theta(\xvec{x}, 1) =  \sum_{k=1}^K \int_0^1 \chi\left( \xsym{\mathcal{Y}}\left(\xvec{x}, 0, r (t_b^k-t_a^k) + t_a^k \right)\right) g (\xsym{\mathcal{Y}}(\xvec{x}, 0 , r), r) \, \xdx{r} \\
    	\begin{aligned}
    		& = \sum_{k=1}^K \frac{1}{t_b^k-t_a^k} \int_0^1 \mathbb{I}_{[t_a^k, t_b^k]}(s) \chi(\xsym{\mathcal{Y}}(\xvec{x}, 0, s)) g \left(\xsym{\mathcal{Y}}\left(\xvec{x}, 0 , \frac{s-t_a^k}{t_b^k-t_a^k} \right), \frac{s-t_a^k}{t_b^k-t_a^k}\right) \, \xdx{s}\\
    		& = \int_0^1 f(\xsym{\mathcal{Y}}(\xvec{x}, 0, s), s) \, \xdx{s},
    	\end{aligned}
    \end{multline}
    where we used for $k \in \{1,\dots,K\}$ the substitutions $r = (s-t_a^k)(t_b^k-t_a^k)^{-1}$. Therefore, the unique solution $\theta^{\#}$ to the problem \eqref{equation:systemctrlnonavcorf} satisfies $\theta^{\#}(\cdot, 1) = \theta(\cdot, 1)$.

  	\paragraph{Step 3. Average corrections.}
  	In view of the equation for $\theta^{\#}$ in \eqref{equation:systemctrlnonavcorf}, and by employing \cref{equation:gtimespaceaverage,equation:af1,equation:thotolf}	together with the fact that $\xmcal{Y}$  is volume preserving, it follows that
  	\[
  		\int_{\mathbb{T}^2} \theta^{\#}(\xvec{z}, t) \, \xdx{\xvec{z}} = \int_0^t \int_{\mathbb{T}^2} f(\xvec{z}, s) \, \xdx{\xvec{z}}  \xdx{s}, \quad \int_0^1 \int_{\mathbb{T}^2} f(\xvec{z}, s) \, \xdx{ \xvec{z}} \, \xdx{s} = 0,
  	\]
  	where $f$ is the function from \eqref{equation:eqf}. Now, we define
  	\[
  		\Theta(x_1, x_2, t) \coloneq  \theta^{\#}(x_1, x_2, t) - \frac{\chi(x_2) \int_0^t \int_{\mathbb{T}^2} f(\xvec{z}, s) \, \xdx{\xvec{z}} \xdx{s}}{\int_{\mathbb{T}^2} \chi(\xvec{z}) \, \xdx{\xvec{z}}}.
  	\]
  	In particular, we have $\Theta(\cdot, 0) = \theta^{\#}(\cdot, 0)$ and $\Theta(\cdot, 1) = \theta^{\#}(\cdot, 1)$, and~$\Theta$ satisfies \eqref{equation:BoussinesqLinearizedlocalized} with the control
  	\begin{equation}\label{equation:definition_lin_eta}
  		\eta(x_1, x_2, t) \coloneq f(x_1, x_2, t) - \frac{\overline{y}_2(t) \chi'(x_2) \int_0^t \int_{\mathbb{T}^2} f(\xvec{z}, s) \, \xdx{\xvec{z}}  \xdx{s} - \chi(x_2) \int_{\mathbb{T}^2} f(\xvec{z}, t) \, \xdx{\xvec{z}}}{\int_{\mathbb{T}^2} \chi(\xvec{z}) \, \xdx{\xvec{z}}}
  	\end{equation}
  	of the form \eqref{equation:avrg}. Moreover,~$\Theta$ satisfies the controllability condition~\eqref{equation:LinApproxEst}, or $\|\Theta(\cdot,1)-\theta_1\|_{m-1} < \varepsilon \|\theta_1\|_m$ if $\widetilde{\alpha}_1, \dots, \widetilde{\alpha}_4$ are chosen in continuous dependence on~$\theta_1$ from the bounded set $\xB$. 
  	
	\paragraph{Step 4. Structure of the control.}
	It remains to name the profiles~$\zeta_1, \dots, \zeta_{6}$ that where implicitly described during the preceding steps. To this end, the function~$f$ from~\eqref{equation:eqf} is expressed by means of
	 \begin{equation*}
		\begin{aligned}
			f(\xvec{x}, t) & = \chi(x_2) \sum_{k=1}^K  \frac{1}{t_b^k-t_a^k} \mathbb{I}_{[t_a^k, t_b^k]}(t) g\left( \xsym{\mathcal{Y}}\left(\xvec{x}, t, \frac{t-t_a^k}{t_b^k-t_a^k}\right), \frac{t-t_a^k}{t_b^k-t_a^k} \right) \\
			& \, = \sum_{k=1}^K \frac{\chi(x_2)\mathbb{I}_{[t_a^k, t_b^k]}(t)}{t_b^k-t_a^k} g\left(\xsym{\mathcal{Y}}\left(\xvec{x}, t, \sigma(t) \right), \sigma(t) \right),
		\end{aligned}
	\end{equation*}
	where
	\begin{equation}\label{equation:eqfsig}
		\sigma(t) \coloneq \sum_{l=1}^K \mathbb{I}_{[t_a^l, t_b^l]}(t) \frac{t-t_a^l}{t_b^l-t_a^l}.
	\end{equation}
	Finally, after recalling the definitions of $\widetilde{\zeta}_1, \dots, \widetilde{\zeta}_{4}$ in \eqref{equation:T} and of $g$ in Step 1 above, we take the profiles $\zeta_1, \dots, \zeta_{6}$ as the distinct elements of the set
	\begin{equation}\label{equation:ctrlstrcaux}
		\left\{ \chi, \chi' \right\} \cup \left\{(\xvec{x}, t) \mapsto \chi(x_2) \widehat{\zeta}_i(\xsym{\mathcal{U}}(\xsym{\mathcal{Y}}(\xvec{x}, t, 1),1,\sigma(t))) \, \left| \right. \, i \in \{1, \dots, 4\} \right\}.
	\end{equation}
	The order of enumerating this set is irrelevant, but we choose to assign the first two profiles as $\zeta_1 = \chi$ and $\zeta_2 = \chi'$ to match the notations in \Cref{subsection:deg}; then, for the sake of making a definite choice, let us take
	\begin{gather*}
		\zeta_{i}(\xvec{x},t) = \chi(x_2) \widehat{\zeta}_{i-2}(\xsym{\mathcal{U}}(\xsym{\mathcal{Y}}(\xvec{x}, t, 1),1,\sigma(t))), \quad i \in \{3,4,5,6\}.
	\end{gather*}
	The parameters $\alpha_1, \dots, \alpha_{6} \in \xLtwo((0,1); \mathbb{R})$ are subsequently determined from the above choice of $\widetilde{\alpha}_1, \dots, \widetilde{\alpha}_4$ in Step 1, and from the definition of~$\eta$ in \eqref{equation:definition_lin_eta}; namely,
	\begin{gather*}
		\alpha_1(t) = \frac{\int_{\mathbb{T}^2} f(\xvec{z}, t) \, \xdx{\xvec{z}}}{\int_{\mathbb{T}^2} \chi(\xvec{z}) \, \xdx{\xvec{z}}}, \quad \alpha_2(t) = -\frac{\overline{y}_2(t)\int_0^t \int_{\mathbb{T}^2} f(\xvec{z}, s) \, \xdx{\xvec{z}}  \xdx{s}}{\int_{\mathbb{T}^2} \chi(\xvec{z}) \, \xdx{\xvec{z}}}, \\
		\alpha_i(t) = \sum_{k=1}^K \frac{\mathbb{I}_{[t_a^k, t_b^k]}(t)}{t_b^k-t_a^k}\widetilde{\alpha}_{i-2}(\sigma(t)), \quad i \in \{3,4,5,6\}.
	\end{gather*}
	If $\widetilde{\alpha}_{1}, \dots, \widetilde{\alpha}_{4}$ are chosen to depend continuously on $\theta_1$ from the bounded set~$\xB$, the formulas \eqref{equation:eqf} and \eqref{equation:definition_lin_eta} allow taking $\alpha_1, \dots, \alpha_6$ as the image of~$\theta_1$ under a bounded linear operator. Because~$\chi$, $\xsym{\mathcal{Y}}$, $\xsym{\mathcal{U}}$, and~$\sigma$ are universal objects for fixed~$\omegaup$, the set in \eqref{equation:ctrlstrcaux} is not affected by the choice of the initial and target states, the viscosity, the thermal diffusivity, the external forces, and also not by the approximation error specified in Theorems~\Rref{theorem:main} and~\Rref{theorem:secondmain}.

	\end{proof}

	\section{Proofs of the main results}\label{section:proofmainresult}
	In this section, the proofs of Theorems~\Rref{theorem:main} and~\Rref{theorem:secondmain} are presented. To avoid the discussion of weak notions of solutions, we assume without loss of generality that $r \geq 2$ in Theorems~\Rref{theorem:main} and~\Rref{theorem:secondmain}. To see that this is possible, let us assume $0 \leq r < 2$. Then, since $(\xsym{\Phi}_{\operatorname{ext}}, \psi_{\operatorname{ext}}) \in \xLtwo((0, T); \bHn{{2}} \times \xHn{{2}})$, the corresponding uncontrolled weak solution naturally regularizes due to known parabolic smoothing effects, and after any short time assumes a state in $\bHn{{2}}\times\xHn{{2}}$ which can be taken as the new initial data. 
	
	Owing to classical elliptic regularity estimates (\cf~\eqref{equation:velvortest} below), for proving Theorems~\Rref{theorem:main} and~\Rref{theorem:secondmain} it suffices to~determine control forces~$\eta, \overline{\eta} \in \xLtwo((0,T); \xCn{{\infty}}(\mathbb{T}^2;\mathbb{R}))$ of the type \eqref{equation:degeneratecontrol} that ensure an estimate of the form
	\begin{equation}\label{equation:targetconditions_reduced}
		\|\xwcurl{\xvec{u}}(\cdot, T) - \xwcurl{\xvec{u}_T}\|_{r-1} + \|\theta(\cdot, T) - \theta_T\|_{r} + \left| \int_{\mathbb{T}^2} \xvec{u}(\xvec{x}, T) \, \xdx{\xvec{x}} \right| < \varepsilon,
	\end{equation}
	where $\xwcurl{\xvec{u}} \coloneq \partial_1 u_2 - \partial_2 u_1$ is the curl of $\xvec{u}$ and~$\varepsilon > 0$ is the approximation accuracy selected in Theorems~\Rref{theorem:main} and~\Rref{theorem:secondmain}. 
	
	\paragraph{Vorticity-temperature formulation.} For initial data $(\xvec{u}_0, \theta_0) \in \bHn{{r}} \times \xHn{{r}}$ and prescribed forces $(\xsym{\Phi}_{\operatorname{ext}}, \psi_{\operatorname{ext}}) \in \xLtwo((0, T); \bHn{r} \times \xHn{r})$, let $(\xvec{u}, \theta, p)$ be the solution to~\eqref{equation:BoussinesqVelocity}, and denote the vorticity $w = \xwcurl{\xvec{u}}$. Then, the triple~$(\xvec{u}, w, \theta)$ satisfies in~$\mathbb{T}^2 \times (0,T)$ the problem
	\begin{equation}\label{equation:BoussinesqVorticity}
		\begin{gathered}
			\partial_t w - \nu \Delta w + \left(\xvec{u} \cdot \xnab\right) w = \partial_1 \theta + \varphi_{\operatorname{ext}}, \quad
			\partial_t \theta - \tau \Delta \theta + (\xvec{u} \cdot \xnab) \theta = \mathbb{I}_{\omegaup} \eta + \psi_{\operatorname{ext}},\\
			\xwcurl{\xvec{u}} = w, \quad \xdiv{\xvec{u}} = 0, \quad
			w(\cdot, 0) = w_0, \quad \theta(\cdot, 0) = \theta_0,
		\end{gathered}
	\end{equation}
	where $w_0 = \xwcurl{\xvec{u}_0}$ and $\varphi_{\operatorname{ext}} = \xwcurl{\xsym{\Phi}_{\operatorname{ext}}}$.
	Vice versa, if $(\xvec{u}, w, \theta)$ solves \eqref{equation:BoussinesqVorticity} and satisfies $\smallint_{\mathbb{T}^2} u_2(\xvec{x},t) \, \xdx{\xvec{x}} = \smallint_0^t\smallint_{\mathbb{T}^2} \theta(\xvec{x}, s) \, \xdx{\xvec{x}} \xdx{s}$ for all~$t \in [0,T]$, one can recover the pressure~$p$, uniquely up to an additive constant depending on time, such that~$(\xvec{u}, \theta, p)$ is a solution to \eqref{equation:BoussinesqVelocity}; \eg, see \cite{Temam2001}.
	
	\paragraph{Inverting the curl operator.}
	Given any $m \in \mathbb{N}$, let $\xsym{\Upsilon}\colon \xHn{{m-1}}\times\mathbb{R}^2 \longrightarrow \bVn{{m}}$ be the following solenoidal realization of~$(\xwcurl{})^{-1}$: for elements~$z \in \xHn{{m-1}}$ and~$\xvec{A} \in \mathbb{R}^2$, the vector field~$\xsym{\Upsilon}(z,\xvec{A}) \in \bVn{{m}}$ is defined as the unique solution to the planar div-curl problem
	\begin{equation}\label{equation:divcurlprob}
		\xwcurl{\xsym{\Upsilon}(z,\xvec{A})} = z, \quad \xdiv{\xsym{\Upsilon}(z,\xvec{A})} = 0, \quad \int_{\mathbb{T}^2} \xsym{\Upsilon}(z,\xvec{A})(\xvec{x}) \, \xdx{\xvec{x}} = \xvec{A}.
	\end{equation}
	In fact, one has the representation
	\[
		\xsym{\Upsilon}(z,\xvec{A}) = \xnab^{\perp} \psi + \xvec{A}, \quad \xnab^{\perp} \psi \coloneq (\partial_2 \psi, - \partial_1 \psi),
	\]
	where the stream function $\psi$ solves Poisson's equation $\Delta \psi = - z$. Then, by the elliptic theory for the Laplacian, there exists a constant $C_0 > 0$ such that
	\begin{equation}\label{equation:velvortest}
		\|\xsym{\Upsilon}(z,\xvec{A})\|_{m} \leq C_0(\|z\|_{m-1} + |\xvec{A}|)
	\end{equation}
	for all $z \in \xHn{{m-1}}$ and $\xvec{A} \in \mathbb{R}^2$.
	
	\subsection{Well-posedness and hydrodynamic scaling}\label{subsection:wellposedness}
	By analysis similar to that for the Navier--Stokes system in $2$D, the two-dimensional Boussinesq system is globally well-posed in the space
	\begin{gather*}
		\mathcal{X}_{T}^m \coloneq  \mathcal{A}_{T}^{m-1} \times \mathcal{A}_{T}^m, \quad \|(f,g)\|_{\mathcal{X}_{T}^m} \coloneq \|f\|_{\mathcal{A}_{T}^{m-1}}  + \|g\|_{\mathcal{A}_{T}^m},
	\end{gather*}
	where $m \in \mathbb{N}$ and $\mathcal{A}_{T}^m \coloneq \xCzero([0,T];\xHn{{m}}(\mathbb{T}^2; \mathbb{R}))\cap\xLtwo((0,T);\xHn{{m+1}}(\mathbb{T}^2; \mathbb{R}))$ is endowed with $\|\cdot\|_{\mathcal{A}_{T}^m} \coloneq \|\cdot\|_{\xCzero([0,T];\xHn{{m}}(\mathbb{T}^2; \mathbb{R}))} + \|\cdot\|_{\xLtwo((0,T);\xHn{{m+1}}(\mathbb{T}^2; \mathbb{R}))}$.
	
	The following well-posedness result can be shown by analysis similar to the incompressible Navier--Stokes system; \eg, see \cite{Temam2001,FoiasManleyTemam1987}.
	\begin{prpstn}\label{proposition:Wellposedness}
		Given any initial state $(w_0, \theta_0) \in \xHn{{m-1}}\times\xHn{{m}}(\mathbb{T}^2; \mathbb{R})$, external forces~$(h_1, h_2) \in \xLtwo((0,T);\xHn{{m-2}}\times\xHn{{m-1}}(\mathbb{T}^2; \mathbb{R}))$, and average $\xvec{A} \in \xWn{{1,2}}((0,T); \mathbb{R}^2)$, there exists a unique solution~$(w, \theta) \in \mathcal{X}_{T}^m$
		to the Boussinesq system in vorticity-temperature form 
		\begin{equation}\label{equation:GeneralBoussCurl}
			\begin{gathered}
				\partial_t w - \nu \Delta w + \left(\xvec{u} \cdot \xnab\right) w = \partial_1 \theta + h_1, \quad \partial_t \theta - \tau\Delta \theta + \left( \xvec{u} \cdot \xnab\right) \theta = h_2, \\
				\xvec{u}(\cdot, t) = \xsym{\Upsilon}\left(w, \xvec{A} \right), \quad w(\cdot, 0) = w_0, \quad \theta(\cdot, 0) = \theta_0.
			\end{gathered}
		\end{equation}
		The resolving operator ${S}_{T}$ associated with \eqref{equation:GeneralBoussCurl} is the continuous mapping
		\begin{gather*}
			\xHn{{m-1}}\times\xHn{{m}}(\mathbb{T}^2; \mathbb{R}) \times \xLtwo((0,T);\xHn{{m-2}}\times\xHn{{m-1}}(\mathbb{T}^2; \mathbb{R})) \times \xWn{{1,2}}((0,T); \mathbb{R}^2) \longrightarrow \mathcal{X}_{T}^m, \\
			(w_0,\theta_0, h_1, h_2, \xvec{A}) \longmapsto {S}_{T}(w_0,\theta_0, h_1, h_2, \xvec{A}) \coloneq (w, \theta).
		\end{gather*}
	\end{prpstn}

	The next result relates the solutions to \eqref{equation:BoussinesqVorticity} at a small time with the solutions to linear transport
	problems with drift $\overline{\xvec{y}}$ at time $t = 1$. This approach originates from~\cite{Coron96,Coron1996EulerEq} and the present version particularly builds on the recent works~\cite{Nersesyan2021,NersesyanRissel2024}. 
	
	\begin{thrm}\label{theorem:SmallTimeConvergenceToLinearized}
		Given $T > 0$, $m \geq 2$, let the initial state $(w_0, \theta_0) \in \xHn{{m}} \times \xHn{{m+1}}$ and external forces~$(\varphi_{\operatorname{ext}},\psi_{\operatorname{ext}}) \in \xLtwo((0,T); \xHn{{m-2}} \times \xHn{{m-1}})$ be arbitrarily fixed. Moreover, denote by $(v_{\delta}, \vartheta_{\delta})_{\delta \in (0,1)}$ the solution family to the linear transport problems
		\begin{equation}\label{equation:InitStLinearizedlocalized}
			\begin{gathered}
				\partial_t v_{\delta} + (\overline{\xvec{y}} \cdot \xnab) v_{\delta} = \partial_1 \vartheta_{\delta}, \quad
				\partial_t \vartheta_{\delta} + (\overline{\xvec{y}} \cdot \xnab) \vartheta_{\delta} = \eta_{\delta}, \\
				\quad v_{\delta}(\cdot, 0) = w_0, \quad \vartheta_{\delta}(\cdot, 0) = \delta\theta_0,
			\end{gathered}
		\end{equation}
		where $\overline{\xvec{y}}$ is the vector field from \Cref{theorem:convection} and $\eta_{\delta} \subset \xLtwo((0,1); \xCinfty(\mathbb{T}^2; \mathbb{R}))$ is chosen in a way that
		\begin{equation}\label{equation:assumpfrcstlc}
			\begin{gathered}
				\int_{\mathbb{T}^2} \eta_{\delta}(\xvec{x}, \cdot) \, \xdx{\xvec{x}} = 0 \, \mbox{ a.e.}, \quad
				\sup_{t \in [0, 1]} \|\vartheta_{\delta}(\cdot, t)\|_{m+1} = \mathscr{O}(\delta) \mbox{ as } \delta \longrightarrow 0.
			\end{gathered}
		\end{equation}
		Moreover, denote 
		\begin{equation*}
			H_{\delta}(\cdot, t) \coloneq \delta^{-2}\eta_{\delta}(\cdot, \delta^{-1}t), \quad \overline{\xvec{y}}_{\delta}(t) \coloneq \delta^{-1}\overline{\xvec{y}}(\delta^{-1}t).
		\end{equation*}
		Then, one has the convergence
		\begin{equation*}
			\lim\limits_{\delta \to 0}\|S_{\delta}(w_0, \theta_0, \varphi_{\operatorname{ext}}, \psi_{\operatorname{ext}} + H_{\delta}, \overline{\xvec{y}}_{\delta})|_{t=\delta} - (v_{\delta}, \delta^{-1}\vartheta_{\delta})(\cdot,1)\|_{\xHn{{m-1}}\times\xHn{{m}}(\mathbb{T}^2;\mathbb{R})} = 0,
		\end{equation*}
		uniformly with respect to $(\varphi_{\operatorname{ext}},\psi_{\operatorname{ext}})$ from bounded subsets of~$\xLtwo((0,T); \xHn{{m-2}} \times \xHn{{m-1}})$.
	\end{thrm}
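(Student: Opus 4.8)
The plan is to pass to the fast time variable $s \coloneq t/\delta$ and to compare the accordingly rescaled nonlinear trajectory with the linear transport solution $(v_\delta,\vartheta_\delta)$ of \eqref{equation:InitStLinearizedlocalized}. Writing $(w,\theta) \coloneq S_\delta(w_0,\theta_0,\varphi_{\operatorname{ext}},\psi_{\operatorname{ext}}+H_\delta,\overline{\xvec{y}}_\delta)$ and setting
\[
\widetilde{w}(\xvec{x},s) \coloneq w(\xvec{x},\delta s), \quad \widetilde{\theta}(\xvec{x},s) \coloneq \delta^{-1}\theta(\xvec{x},\delta s),
\]
the splitting $\xvec{u} = \xsym{\Upsilon}(w,\overline{\xvec{y}}_\delta) = \xsym{\Upsilon}(\widetilde{w},\xsym{0}) + \delta^{-1}\overline{\xvec{y}}(s)$, together with a direct substitution into \eqref{equation:GeneralBoussCurl}, shows that on the fixed interval $[0,1]$ the pair $(\widetilde{w},\widetilde{\theta})$ obeys
\begin{gather*}
\partial_s \widetilde{w} + (\overline{\xvec{y}}\cdot\xnab)\widetilde{w} - \delta\nu\Delta\widetilde{w} = \partial_1\widetilde{\theta} - \delta(\xsym{\Upsilon}(\widetilde{w},\xsym{0})\cdot\xnab)\widetilde{w} + \delta\varphi_{\operatorname{ext}}(\cdot,\delta s), \\
\partial_s \widetilde{\theta} + (\overline{\xvec{y}}\cdot\xnab)\widetilde{\theta} - \delta\tau\Delta\widetilde{\theta} = \eta_\delta - \delta(\xsym{\Upsilon}(\widetilde{w},\xsym{0})\cdot\xnab)\widetilde{\theta} + \delta^2\psi_{\operatorname{ext}}(\cdot,\delta s),
\end{gather*}
with $\widetilde{w}(\cdot,0) = w_0$ and $\widetilde{\theta}(\cdot,0) = \delta\theta_0$. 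In these variables the asserted convergence reads $\widetilde{w}(\cdot,1)\to v_\delta(\cdot,1)$ in $\xHn{{m-1}}$ and $\delta^{-1}(\widetilde{\theta}(\cdot,1)-\vartheta_\delta(\cdot,1))\to 0$ in $\xHn{{m}}$, so everything reduces to controlling the differences $R \coloneq \widetilde{w}-v_\delta$ and $\rho \coloneq \widetilde{\theta}-\vartheta_\delta$, which start from zero and are forced only by terms carrying strictly positive powers of $\delta$ (the viscous contributions $\delta\nu\Delta v_\delta$, $\delta\tau\Delta\vartheta_\delta$, the curl part of the drift, and the rescaled forces).

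Next I would secure uniform-in-$\delta$ a priori bounds by a continuity argument. The linear vorticity $v_\delta$ stays bounded in $\xHn{{m}}$, since it is transported from $w_0 \in \xHn{{m}}$ with forcing $\partial_1\vartheta_\delta = \mathscr{O}(\delta)$, while $\|\vartheta_\delta\|_{m+1}=\mathscr{O}(\delta)$ holds by assumption \eqref{equation:assumpfrcstlc}; hence it suffices to keep $R$ and $\rho$ small on a maximal subinterval and to show the bounds self-improve. The elliptic estimate \eqref{equation:velvortest} gives $\|\xsym{\Upsilon}(\widetilde{w},\xsym{0})\|_{m}\leq C_0\|\widetilde{w}\|_{m-1}$, so the curl part of the drift is $\mathscr{O}(1)$ and enters both equations only through its $\mathscr{O}(\delta)$-prefactor; the coupling between $R$ and $\rho$ is therefore weak and decouples in the limit.

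I would then run energy estimates, testing the $R$-equation in $\xHn{{m-1}}$ and the $\rho$-equation in $\xHn{{m}}$. The transport term $(\overline{\xvec{y}}\cdot\xnab)(\cdot)$ drops out because $\overline{\xvec{y}}$ is spatially constant, and the viscous terms supply the nonnegative dissipation $\delta\nu\|\xnab R\|_{m-1}^2$ and $\delta\tau\|\xnab\rho\|_{m}^2$. For $\rho$, the forcing $\delta\tau\Delta\vartheta_\delta$ is $\mathscr{O}(\delta^2)$ in $\xHn{{m-1}}$ precisely because $\|\vartheta_\delta\|_{m+1}=\mathscr{O}(\delta)$; the transport remainder $-\delta(\xsym{\Upsilon}(\widetilde{w},\xsym{0})\cdot\xnab)\widetilde{\theta}$ splits into an $\mathscr{O}(\delta^2)$ piece and a piece bounded by $C\delta\|\rho\|_{m}^2$; and in the term $\delta^2\psi_{\operatorname{ext}}$, where $\psi_{\operatorname{ext}}$ lies only in $\xHn{{m-1}}$, one derivative is integrated by parts onto $\rho$ and the resulting $\|\rho\|_{m+1}$ is absorbed into the dissipation by Young's inequality, leaving an $\mathscr{O}(\delta^3)$ remainder after integrating $\|\psi_{\operatorname{ext}}\|_{m-1}^2$ over $[0,1]$. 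A Grönwall argument then gives $\|\rho(\cdot,1)\|_{m}^2 = \mathscr{O}(\delta^3)$, so $\delta^{-1}\|\rho(\cdot,1)\|_{m} = \mathscr{O}(\delta^{1/2})\to 0$; the analogous and simpler estimate for $R$ (its awkward forcing $\delta\nu\Delta v_\delta$ is again handled by integration by parts against the dissipation) yields $\|R(\cdot,1)\|_{m-1}=\mathscr{O}(\delta^{1/2})\to 0$. Since all constants depend on $(\varphi_{\operatorname{ext}},\psi_{\operatorname{ext}})$ only through $\|(\varphi_{\operatorname{ext}},\psi_{\operatorname{ext}})\|_{\xLtwo((0,1);\xHn{{m-2}}\times\xHn{{m-1}})}$, the uniformity over bounded sets is automatic.

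The principal obstacle is the temperature estimate. Because the target involves the amplified quantity $\delta^{-1}\vartheta_\delta$, it does not suffice to show $\rho\to 0$: one needs the genuinely sharper rate $\|\rho(\cdot,1)\|_{m}=o(\delta)$. This compels one to exploit the higher-order smallness $\|\vartheta_\delta\|_{m+1}=\mathscr{O}(\delta)$ from \eqref{equation:assumpfrcstlc} in order to turn the two-derivative-losing viscous forcing $\delta\tau\Delta\vartheta_\delta$ into an $\mathscr{O}(\delta^2)$ quantity, and to rely on the parabolic dissipation as the only available device to accommodate both this derivative loss and the low regularity of $\psi_{\operatorname{ext}}$. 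A secondary technical point is the bootstrap needed to propagate the uniform bound on $\xsym{\Upsilon}(\widetilde{w},\xsym{0})$, so that the $\mathscr{O}(\delta)$-prefactors truly produce small remainders rather than being overwhelmed by growth of $\widetilde{w}$.
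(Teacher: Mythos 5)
Your proposal is essentially the paper's own argument written in the fast time variable: the paper makes the ansatz $w=z_\delta+r$, $\theta=\theta_\delta+s$ with $z_\delta(\cdot,t)=v_\delta(\cdot,\delta^{-1}t)$ and $\theta_\delta(\cdot,t)=\delta^{-1}\vartheta_\delta(\cdot,\delta^{-1}t)$ on $[0,\delta]$ and proves $\|r(\cdot,\delta)\|_{m-1}+\|s(\cdot,\delta)\|_{m}\to 0$ by the same higher-order energy estimates plus Gr\"onwall (closing via the quadratic differential inequality $\xdrv{}{t}\Psi\le C\Psi^2$ with vanishing initial datum rather than a continuity bootstrap), and since your remainders satisfy $R(\varsigma)=r(\cdot,\delta\varsigma)$ and $\rho(\varsigma)=\delta\, s(\cdot,\delta\varsigma)$, the $o(\delta)$ bound you need for $\rho(\cdot,1)$ is exactly the paper's statement that $s(\cdot,\delta)\to0$. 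Two slips that do not affect the conclusion: $\widetilde{\theta}$ must be $\delta\,\theta(\cdot,\delta\varsigma)$ rather than $\delta^{-1}\theta(\cdot,\delta\varsigma)$ (otherwise the stated initial datum $\delta\theta_0$ and your rescaled equations are inconsistent), and the term $\delta^{2}\psi_{\operatorname{ext}}(\cdot,\delta\varsigma)$ contributes $C\delta^{2}\int_0^{\delta}\|\psi_{\operatorname{ext}}(\cdot,\sigma)\|_{m-1}^{2}\,\xdx{\sigma}=o(\delta^{2})$, not $\mathscr{O}(\delta^{3})$, to $\|\rho(\cdot,1)\|_{m}^{2}$, so the advertised rate $\mathscr{O}(\delta^{1/2})$ is unavailable for general $\psi_{\operatorname{ext}}\in\xLtwo((0,1);\xHn{{m-1}})$, although $\delta^{-1}\|\rho(\cdot,1)\|_{m}\to0$ still follows.
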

	
	\begin{proof}
		For any $\delta \in (0, 1)$, let $(w, \theta) \in \mathcal{X}_{\delta}^m$ be the solution to the nonlinear problem \eqref{equation:BoussinesqVorticity} driven by $H_{\delta}$, and with velocity average $\overline{\xvec{y}}_{\delta}$. Namely, we take $(w, \theta) = S_{\delta}(w_0, \theta_0,  \varphi_{\operatorname{ext}},  \psi_{\operatorname{ext}} + H_{\delta}, \overline{\xvec{y}}_{\delta})$,
		with associated velocity $\xvec{u} = \xsym{\Upsilon}\left(w, \overline{\xvec{y}}_{\delta}\right)$,	where the div-curl solution operator $\xsym{\Upsilon}$ is defined via~\eqref{equation:divcurlprob}.
		Then, we make an ansatz of the form
		\begin{equation}\label{equation:ansatz}
			\begin{gathered}
				w = z_{\delta} + r, \quad \xvec{u} = \overline{\xvec{y}}_{\delta} + \xvec{Z}_{\delta} + \xvec{R}, \quad	\theta = \theta_{\delta} + s,
			\end{gathered}
		\end{equation}
		where
		\[
		z_{\delta}(\cdot,t) \coloneq v_{\delta}(\cdot, \delta^{-1}t), \quad
		\theta_{\delta}(\cdot,t) \coloneq \delta^{-1}\vartheta_{\delta}(\cdot,\delta^{-1}t), \quad \xvec{Z}_{\delta} \coloneq \xsym{\Upsilon}(z_{\delta}, \xsym{0}), \quad \xvec{R} \coloneq \xsym{\Upsilon}(r, \xsym{0}).
		\]
		The theorem will be proved by showing that
		\begin{equation}\label{equation:enestsg}
			\|r(\cdot, \delta)\|_{m-1} + \|s(\cdot, \delta)\|_{m} \longrightarrow 0 \mbox{ as } \delta \longrightarrow 0,
		\end{equation}
		uniformly for $(\varphi_{\operatorname{ext}},\psi_{\operatorname{ext}})$ from bounded subsets of~$\xLtwo((0,1); \xHn{{m-2}} \times \xHn{{m-1}})$.
		\paragraph{Step 1. Description of remainders.}
		By plugging the ansatz \eqref{equation:ansatz} into the equation~\eqref{equation:BoussinesqVorticity} satisfied by $(w, \theta)$, one finds that $r$ and $s$ solve the evolutionary system
		\begin{equation}\label{equation:rem}
			\begin{gathered}
				\partial_t r - \nu \Delta r + \left((\overline{\xvec{y}}_{\delta} + \xvec{Z}_{\delta} + \xvec{R}) \cdot \xnab\right) r + (\xvec{R} \cdot \xnab) z_{\delta} = \Xi_{\delta} + \partial_1 s, \\
				\partial_t s - \tau\Delta s + \left((\overline{\xvec{y}}_{\delta} + \xvec{Z}_{\delta} + \xvec{R}) \cdot \xnab\right) s + (\xvec{R} \cdot \xnab) \theta_{\delta} = \Lambda_{\delta}
			\end{gathered}
		\end{equation}
		with initial conditions $r(\cdot, 0) = s(\cdot, 0) = 0$ and forcing terms
		\begin{equation*}
			\begin{gathered}
				\Xi_{\delta} \coloneq \varphi_{\operatorname{ext}} - (\xvec{Z}_{\delta} \cdot \xnab) z_{\delta} + \nu \Delta z_{\delta}, \quad \Lambda_{\delta} \coloneq \psi_{\operatorname{ext}} - (\xvec{Z}_{\delta} \cdot \xnab) \theta_{\delta} + \tau \Delta \theta_{\delta}.
			\end{gathered}
		\end{equation*}
		Moreover, one has the elliptic estimates (\cf~\eqref{equation:velvortest})
		\begin{equation}\label{equation:ellestRZ}
			\|\xvec{R}(\cdot, t)\|_{m} \leq C_0\|r(\cdot, t)\|_{m-1}, \quad \|\xvec{Z}_{\delta}(\cdot, t)\|_{m} \leq C_0 \|z_{\delta}(\cdot, t)\|_{m-1}, \quad t \in [0, \delta].
		\end{equation}
		\paragraph{Step 2. \Apriori~estimates.}
		Since the vorticity-temperature coupling in \eqref{equation:BoussinesqVorticity} is linear, and due to the dissipation of both $w$ and $\theta$, the subsequent estimates are similar to those provided for the Navier--Stokes system in \cite[Proof of Lemma 5.5]{NersesyanRissel2024} and \cite{Nersesyan2021}*{Proposition 2.2}. First, we formally multiply the equations in \eqref{equation:rem} with~$(-\Delta)^{m-1} r$ and $(-\Delta)^{m} s$ respectively. Then we use integration by parts, Poincar\'e's inequality, \eqref{equation:ellestRZ}, and the periodic boundary conditions. As a result, 
		{\allowdisplaybreaks
			\begin{multline*}
				\frac{1}{2}\|r(\cdot, t)\|_{m-1}^2 + \frac{1}{2}\|s(\cdot, t)\|_{m}^2 + \nu \int_0^t \| r(\cdot, \sigma) \|_{m}^2 \, \xdx{\sigma} + \tau\int_0^t \| s(\cdot, \sigma) \|_{m+1}^2 \, \xdx{\sigma} \\
				\begin{aligned}
					& \leq \int_0^t \left(\|\Xi_{\delta}(\cdot, \sigma)\|_{m-2} \|r(\cdot, \sigma)\|_{m} \, \xdx{\sigma} + \|\Lambda_{\delta}(\cdot, \sigma)\|_{m-1} \|s(\cdot, \sigma)\|_{m+1}\right) \, \xdx{\sigma} \\
					& \quad \, + \int_0^t \|\xvec{R}(\cdot, \sigma)\|_{m} \left( 	\|z_{\delta}(\cdot, \sigma)\|_{m}\|r(\cdot, \sigma)\|_{m-1} + \|r(\cdot, \sigma)\|_{m-1}\|r(\cdot, \sigma)\|_{m} \right) \, \xdx{\sigma}\\
					& \quad \, + \int_0^t \|\xvec{R}(\cdot, \sigma)\|_{m} \left( 	\|\theta_{\delta}(\cdot, \sigma)\|_{m+1}\|s(\cdot, \sigma)\|_{m} + \|s(\cdot, \sigma)\|_{m}\|s(\cdot, \sigma)\|_{m+1} \right) \, \xdx{\sigma}\\
					& \quad \, + \int_0^t \left(\|\overline{\xvec{y}}_{\delta}(\sigma) + 	\xvec{Z}_{\delta}(\cdot, \sigma)\|_{m+1} \|r(\cdot, \sigma)\|_{m-1}^2 + \|s(\cdot, \sigma)\|_{m} \|r(\cdot, \sigma)\|_{m-1} \right)\, \xdx{\sigma} \\
					& \quad \, + \int_0^t \|\overline{\xvec{y}}_{\delta}(\sigma) + 	\xvec{Z}_{\delta}(\cdot, \sigma)\|_{m+1} \|s(\cdot, \sigma)\|_{m}^2 \, \xdx{\sigma}.
				\end{aligned}
			\end{multline*}
		}\noindent
		In order to further estimate the right-hand side of the previous inequality, we again use~\eqref{equation:ellestRZ}. Hereby, we also account for the $\delta$-scaling by substituting~$\sigma \leftrightarrow \delta \sigma$ under several of the integral signs. \Eg, it follows that
		\begin{equation}\label{equation:gfe}
			\begin{gathered}
				\int_0^t \|f(\cdot, \sigma)\|_{l} \, \xdx{\sigma} \leq \min \left\{ \delta \int_0^1 \|f(\cdot, \delta \sigma)\|_{l} \, \xdx{\sigma}, \,  \int_0^\delta \|f(\cdot, \sigma)\|_l \, \xdx{\sigma} \right\}
			\end{gathered}
		\end{equation}
		for~$t \in (0, \delta)$ and $f \in \xLone((0, \delta); \xHn{{l}}(\mathbb{T}^2; \mathbb{R}))$ with~$l \geq 0$. The relations in~\eqref{equation:gfe}, combined with the respective boundedness of~$\Xi_{\delta}$ in~$\xLone((0, \delta); \xHn{{m-2}}(\mathbb{T}^2; \mathbb{R}))$ and of~$\overline{\xvec{y}}$ in $\xCzero([0,1]; \mathbb{R}^2)$, yield $\lim\limits_{\delta \to 0} \smallint_0^{\delta} \|\Xi_{\delta}(\cdot, \sigma)\|_{m-2} \, \xdx{\sigma} = 0$ and
		\begin{gather*}
			\lim\limits_{\delta \to 0} \int_0^{\delta} \|\overline{\xvec{y}}_{\delta}(\sigma) + 	\xvec{Z}_{\delta}(\cdot, \sigma)\|_{m+1} \, \xdx{\sigma}  \leq \sup_{s \in [0, 1]} |\overline{\xvec{y}}(s)|.
		\end{gather*}
		In particular, thanks to the assumptions in~\eqref{equation:assumpfrcstlc}, one can infer 
		\begin{gather*}
			\int_0^{\delta} \|\theta_{\delta}(\cdot, \sigma)\|_{m+1}^2 \, \xdx{\sigma} \leq \delta^{-1}\sup_{s \in [0,1]} \|\vartheta_{\delta}(\cdot, s)\|_{m+1}^2 = \mathscr{O}(\delta) \mbox{ as } \delta \longrightarrow 0, \\
			\lim\limits_{\delta \to 0} \int_0^{\delta} \|\Lambda_{\delta}(\cdot, \sigma)\|_{m-1} \, \xdx{\sigma} = 0.
		\end{gather*}
		
		Therefore, in view of Gr\"onwall's lemma, and by essentially copying the analysis from \cite[Proof of Lemma 5.5]{NersesyanRissel2024}, it follows that there is a constant $C > 0$, which is independent of~$\delta \in (0,1)$, $t \in [0,\delta]$, and~$(w_0, \theta_0)$ varying in a bounded subset of~$\xHn{{m}}\times\xHn{{m+1}}$, such that  
		\begin{equation*}
			\|r(\cdot, t)\|_{m-1}^2 + \|s(\cdot, t)\|_{m}^2 \leq C_{\delta} + C\int_0^t \left(\|r(\cdot, \sigma)\|_{m-1}^4 + \|s(\cdot, \sigma)\|_{m}^4\right) \, \xdx{\sigma},
		\end{equation*}
		where the family of constants $(C_{\delta})_{\delta\in(0,1)}$ satisfies $\lim_{\delta \to 0} C_{\delta} = 0$. Finally, after denoting
		\[
		\Psi(t) \coloneq C_{\delta} + C\int_0^t \left(\|r(\cdot, \sigma)\|_{m-1}^4 + \|s(\cdot, \sigma)\|_{m}^4\right) \, \xdx{\sigma},
		\]
		the convergence asserted in \eqref{equation:enestsg} follows by utilizing that $\Psi$ meets the differential inequality $\xdrv{}{t}\Psi \leq C \Psi^2$; for situations of similar nature, see, \eg,~\cite{Nersesyan2021}*{Proposition 2.2} and \cite[Proof of Lemma 5.5]{NersesyanRissel2024}.
	\end{proof}

	\subsection{Controllability of the vorticity-temperature formulation}\label{subsection:vortcontrlglb}
	We denote by $\zeta_1, \dots, \zeta_{6} \in \xLtwo((0,1); \xCinfty(\mathbb{T}^2; \mathbb{R}))$ the profiles that appear in \Cref{subsection:deg} and which are constructed in \Cref{theorem:localized}. Moreover, we recall that $\overline{\xvec{y}}$ is obtained via \Cref{theorem:convection} and use the short notation $\overline{\xvec{y}}_{\delta}(t) = \delta^{-1}\overline{\xvec{y}}(\delta^{-1}t)$ for $\delta \in (0,1)$ and $t \in [0, \delta]$.
	
	The following result is a consequence of Theorems~\Rref{theorem:localized} and \Rref{theorem:SmallTimeConvergenceToLinearized}. It provides physically localized controls for steering the temperature towards any admissible target in a small time, while keeping the vorticity close to the initial one. 
	\begin{thrm}\label{theorem:main_smalltimelargecontrol}
		For any given ${T} > 0$, $m \geq 2$, $(w_0, \theta_0, \theta_1) \in \xHn{{m}} \times \xHn{{m+1}} \times \xHn{{m+1}}$, and~$(\varphi_{\operatorname{ext}}, \psi_{\operatorname{ext}}) \in \xLtwo((0, {T}); \xHn{{m-2}} \times \xHn{{m-1}})$, there exist parameters
		\begin{equation}\label{equation:prevy}
			(\alpha_{\delta,1}, \dots, \alpha_{\delta, 6})_{\delta \in (0,1)} \subset \xLtwo((0,1); \mathbb{R}), \quad \sum_{l=1}^{6} \int_{\mathbb{T}^2} \alpha_{\delta, l} \zeta_l (\xvec{x}, \cdot) \, \xdx{\xvec{x}} = 0 \, \mbox{ a.e.}
		\end{equation}
		such that, when~$\delta \longrightarrow 0$, one has in~$\xHn{{m-1}} \times \xHn{{m}}$ the convergence
		\begin{equation*}
			{S}_{\delta}\left(w_0, \theta_0, \varphi_{\operatorname{ext}}, \psi_{\operatorname{ext}} + \delta^{-2}\sum_{l=1}^{6} \alpha_{\delta, l}(\delta^{-1}\cdot) \zeta_l(\cdot, \delta^{-1}\cdot), \overline{\xvec{y}}_{\delta}\right)\!|_{t=\delta} \longrightarrow (w_0, \theta_1),
		\end{equation*}
		uniformly for $(\varphi_{\operatorname{ext}}, \psi_{\operatorname{ext}})$ from bounded subsets of $\xLtwo((0,{T}); \xHn{{m-2}} \times \xHn{{m-1}})$.
	\end{thrm}
	\begin{proof}
		Let $\rho \in (0, 1)$ and take $(\widetilde{v}_{\rho}, \widetilde{\vartheta}_{\rho})$ as the unique solution to the homogeneous linear problem
		\begin{equation}\label{equation:FreeBoussinesqLinearizedlocalized}
			\partial_t \widetilde{v}_{\rho} + (\overline{\xvec{y}} \cdot \xnab) \widetilde{v}_{\rho} = \partial_1 \widetilde{\vartheta}_{\rho}, \quad \partial_t \widetilde{\vartheta}_{\rho} + (\overline{\xvec{y}} \cdot \xnab) \widetilde{\vartheta}_{\rho} = 0, \quad
			(\widetilde{v}_{\rho}, \widetilde{\vartheta}_{\rho})(\cdot, 0) = (w_0, \rho\theta_0).
		\end{equation}
		By the properties of $\xsym{\mathcal{Y}}$ from \Cref{theorem:convection}, it follows that
		\[
			\widetilde{\vartheta}_{\rho}(\cdot, 1) = \rho\theta_0, \quad \widetilde{v}_{\rho}(\cdot, 1) = w_0 + \widetilde{v}_{\rho, 1}, \quad \widetilde{v}_{\rho, 1} \coloneq \int_0^1 \partial_1 \widetilde{\vartheta}_{\rho}(\xsym{\mathcal{Y}}(\cdot, 1, s), s) \, \xdx{s}.
		\]
		In addition, basic estimates yield
		\begin{equation}\label{equation:bet}
			\sup\limits_{t \in [0,1]}\|\widetilde{\vartheta}_{\rho}(\cdot, t)\|_{m+1} + \|\widetilde{v}_{\rho, 1}\|_{m} = \mathscr{O}(\rho) \mbox{ as } \rho \longrightarrow 0.
		\end{equation}
		Now, let any $\varepsilon > 0$ be fixed. Then, for each $\rho \in (0, 1)$, an application of \Cref{theorem:localized} with the target $\Theta_1 \coloneq \rho(\theta_1 - \theta_0)$
		provides $(\alpha_{\rho,1}, \dots, \alpha_{\rho, 6})_{\rho \in (0,1)} \subset \xLtwo((0,1); \mathbb{R})$ such that the respective solution
		\begin{equation*}
			(V_{\rho}, \Theta_{\rho}) \in \xCzero([0,1];\xHn{{m}}\times\xHn{{m+1}})\cap\xWn{{1,2}}((0,1);\xHn{{m-1}}\times\xHn{{m}})
		\end{equation*}
		to
		\begin{equation}\label{equation:VTH}
			\partial_t V_{\rho} + (\overline{\xvec{y}} \cdot \xnab) V_{\rho} = \partial_1 \Theta_{\rho}, \quad \!\!
			\partial_t \Theta_{\rho} + (\overline{\xvec{y}} \cdot \xnab) \Theta_{\rho} = \sum_{l=1}^{6} \alpha_{\rho, l} \zeta_l, \quad \!\! 
			V_{\rho}(\cdot, 0) = \Theta_{\rho}(\cdot, 0) = 0,
		\end{equation} 
		satisfies
		\[
			\|\Theta_{\rho}(\cdot,1)-\Theta_1\|_{m} < \varepsilon \rho.
		\]
		Moreover, the control parameters are chosen such that
		\begin{equation}\label{equation:corcob}
			\begin{gathered}
				\|\sum_{l=1}^{6} \alpha_{\rho, l} \zeta_l\|_{\xLtwo((0,1);\xHn{{m+1}})} \leq \rho c_{\varepsilon} \left(\sum_{l=1}^{6}\|\zeta_l\|_{\xLtwo((0,1);\xHn{{m+1}})}\right)  \|\theta_1 - \theta_0\|_{m+1},
				\\
				\sum_{l=1}^{6} \int_{\mathbb{T}^2} \alpha_{\rho, l} \zeta_l (\xvec{x}, \cdot) \, \xdx{\xvec{x}} = 0 \, \mbox{ a.e.},
			\end{gathered}
		\end{equation}
		where $c_{\varepsilon}$ is the operator norm of the bounded linear operator $\mathcal{C}_{\varepsilon}$ from \Cref{theorem:localized}.
		Further, by estimating $V_{\rho}$ and $\Theta_{\rho}$ in \eqref{equation:VTH} using \eqref{equation:corcob}, one finds
		\begin{equation}\label{equation:coroe}
			\sup\limits_{t \in [0,1]}\|\Theta_{\rho}(\cdot, t)\|_{m+1} + \|w_0 - \widetilde{v}_{\rho}(\cdot, 1)\|_{m} + \|V_{\rho}(\cdot, 1)\|_{m} = \mathscr{O}(\rho) \mbox{ as } \rho \longrightarrow 0.
		\end{equation}
		Now, the pair $(v_{\rho}, \vartheta_{\rho}) \coloneq (\widetilde{v}_{\rho} + V_{\rho}, \widetilde{\vartheta}_{\rho} + \Theta_{\rho})$ solves the linear reference system \eqref{equation:InitStLinearizedlocalized} in \Cref{theorem:SmallTimeConvergenceToLinearized} with force $\eta_{\rho} = \sum_{l=1}^{6} \alpha_{\rho, l} \zeta_l$, and it holds
		\[
			\|\vartheta_{\rho}(\cdot, 1) - \rho \theta_1\|_{m} < \varepsilon \rho.
		\]
		Moreover, from \eqref{equation:bet} and \eqref{equation:coroe} it follows that
		\begin{equation*}
			\sup_{t \in [0, 1]} \|\vartheta_{\rho}(\cdot, t)\|_{m+1} + \|v_{\rho}(\cdot,1) - w_0\|_{m-1} = \mathscr{O}(\rho) \mbox{ as } \rho \longrightarrow 0.
		\end{equation*}
		Hence, by \Cref{theorem:SmallTimeConvergenceToLinearized}, there exists~$\delta_* > 0$ such that
		\[
			\| S_{\delta}\left(w_0, \theta_0,  \varphi_{\operatorname{ext}},  \psi_{\operatorname{ext}} + \delta^{-2} \eta_{\delta}(\cdot, \delta^{-1}\cdot), \overline{\xvec{y}}_{\delta} \right)\!|_{t=\delta} - (w_0, \theta_1) \|_{\xHn{{m-1}}\times\xHn{{m}}} < \varepsilon
		\]
		for all~$\delta \in (0, \delta_*)$.
	\end{proof}

	The following result specifies types of target states that are approximately reached (without using a control) when starting from appropriate initial data. To this end, let $\Pi_1 \colon\mathcal{A}_{T}^{m} \times \mathcal{A}_{T}^{m+1} \longrightarrow \mathcal{A}_{T}^{m}$ denote the projection to the first component.
	
	\begin{thrm}\label{theorem:propvortcontrl}
		Let~$m \geq 2$, ${T} > 0$, $\widetilde{\xi}, \widehat{\xi} \in \xCinfty(\mathbb{T}^2; \mathbb{R})$ with zero averages, $(w_0, \theta_0) \in \xHn{{m}} \times \xHn{{m+1}}$, and~$(\varphi_{\operatorname{ext}}, \psi_{\operatorname{ext}}) \in \xLtwo((0, {T}); \xHn{{m-2}} \times \xHn{{m-1}})$ be given. As $\delta \longrightarrow 0$, one has
		\begin{gather}\label{equation:propvortcontrl0}
			\Pi_1 S_{\delta}(w_0 , \theta_0 - \delta^{-1} \widetilde{\xi}, \varphi_{\operatorname{ext}}, \psi_{\operatorname{ext}}, \xsym{0}) |_{t=\delta}  \longrightarrow w_0 - \partial_1\widetilde{\xi},\\
			S_{\delta}(w_0 \!+\! \delta^{-1/2}\widehat{\xi}, \theta_0, \varphi_{\operatorname{ext}}, \psi_{\operatorname{ext}}, \xsym{0})\! |_{t=\delta} \!-\! (\delta^{-1/2}\widehat{\xi}, 0) \! \longrightarrow \! (w_0 \!-\! (\xsym{\Upsilon}(\widehat{\xi}, \xsym{0})  \cdot  \xnab) \widehat{\xi}, \theta_0)\label{equation:propvortcontrl1}
		\end{gather}
		in the norms of $\xHn{{m-1}}$ and $\xHn{{m-1}}\times\xHn{m}$ respectively. Both convergences in \eqref{equation:propvortcontrl0} and \eqref{equation:propvortcontrl1} are uniform with respect to $(\varphi_{\operatorname{ext}}, \psi_{\operatorname{ext}})$ from bounded subsets of $\xLtwo((0, {T}); \xHn{{m-2}} \times \xHn{{m-1}})$.
	\end{thrm}
	\begin{proof}
		We develop a simplified version of the arguments given in \cite[Proof of Proposition 1.2]{BoulvardGaoNersesyan2023} for the $3$D primitive equations. 
		First, we observe that
		\[
			(W_{\delta}, \Theta_{\delta}) = S_{\delta}(w_0, \theta_0 - \delta^{-1} \widetilde{\xi}, \varphi_{\operatorname{ext}}, \psi_{\operatorname{ext}}, \xsym{0}) + (0, \delta^{-1} \widetilde{\xi})
		\]
		solves
		\begin{equation}\label{equation:BoussinesqMultiplicativeControl}
			\begin{gathered}
				\partial_t W_{\delta} - \nu \Delta W_{\delta} + \left(\xvec{U}_{\delta} \cdot \xnab\right) W_{\delta} = \partial_1 (\Theta_{\delta} - \delta^{-1} \widetilde{\xi}) + \varphi_{\operatorname{ext}}, \\
				\partial_t \Theta_{\delta} - \tau \Delta (\Theta_{\delta} - \delta^{-1} \widetilde{\xi}) + (\xvec{U}_{\delta} \cdot \xnab) (\Theta_{\delta} - \delta^{-1} \widetilde{\xi}) = \psi_{\operatorname{ext}},\\
				\xwcurl{\xvec{U}_{\delta}} = W_{\delta}, \quad \xdiv{\xvec{U}_{\delta}} = 0, \quad \int_{\mathbb{T}^2} \xvec{U}_{\delta}(\xvec{x}, t) \, \xdx{\xvec{x}} = \xsym{0},  \\
				W_{\delta}(\cdot, 0) = W_0 \coloneq w_0, \quad \Theta_{\delta}(\cdot, 0) = \Theta_0 \coloneq \theta_0.
			\end{gathered}
		\end{equation}
		
		\paragraph{Showing \eqref{equation:propvortcontrl0}.} It remains to verify for the solutions $(W_{\delta}, \xvec{U}_{\delta}, \Theta_{\delta})_{\delta \in (0,1)}$ to \eqref{equation:BoussinesqMultiplicativeControl} the convergence
			\begin{equation}\label{equation:convergencedeltaW}
				\lim\limits_{\delta \to 0} \| W_{\delta}(\cdot, \delta) - (W_0 - \partial_1\widetilde{\xi}) \|_{m-1} = 0.
			\end{equation}
		Establishing \eqref{equation:convergencedeltaW} means showing $\|Q_{\delta}(\cdot, \delta)\|_{m-1} \longrightarrow 0$ as $\delta \longrightarrow 0$ for the functions
		\[
			Q_{\delta}(\xvec{x}, t) \coloneq W_{\delta}(\xvec{x}, t) - W_0(\xvec{x}) + \delta^{-1} t \partial_1 \widetilde{\xi} (\xvec{x}), \quad \delta \in (0,1).
		\]
		In fact, each $Q_{\delta}$ meets the initial condition $Q_{\delta}(\cdot, 0) = 0$ and solves
		\begin{equation}\label{equation:mcrt1}
			\begin{gathered}
				\partial_t Q_{\delta} - \nu \Delta Q_{\delta} + (\xvec{V}_{\delta} \cdot \xnab) Q_{\delta} = \varphi_{\operatorname{ext}} + \partial_1 (\Theta_{\delta} - R_{\delta}) + \partial_1 R_{\delta} - \nu \Delta (Q_{\delta} - W_{\delta}) \\ 
				+ ((\xvec{U}_{\delta} - \xvec{V}_{\delta}) \cdot \xnab) (Q_{\delta} - W_{\delta}) + (\xvec{V}_{\delta} \cdot \xnab) (Q_{\delta} - W_{\delta}) + ((\xvec{V}_{\delta} - \xvec{U}_{\delta}) \cdot \xnab) Q_{\delta},
			\end{gathered}
		\end{equation}
		where $	\xvec{V}_{\delta} = \xsym{\Upsilon}\left(Q_{\delta}, \xsym{0} \right)$
		and
		\begin{equation}\label{equation:Rdelta}
			\begin{aligned}
				R_{\delta}(\cdot, t) \coloneq \Theta_{\delta}(\cdot, t) - \Theta_0 
				+ \delta^{-1}t \tau \Delta\widetilde{\xi} - \delta^{-1}t  \left(\xsym{\Upsilon}\left(W_0 - \frac{\delta^{-1} t \partial_1 \widetilde{\xi}}{2}, \xvec{0}\right) \cdot \xnab \right) \widetilde{\xi}
			\end{aligned}
		\end{equation}
		for $\delta \in (0,1)$ and $t \in [0, \delta]$. In particular, one has~$R_{\delta}(\cdot, 0) = 0$, and~$R_{\delta}$ satisfies
		\begin{equation}\label{equation:mcrt2}
			\begin{gathered}
				\partial_t R_{\delta} - \tau\Delta R_{\delta} + (\xvec{V}_{\delta} \cdot \xnab) R_{\delta} = \psi_{\operatorname{ext}} - \tau\Delta (R_{\delta} - \Theta_{\delta}) + (\xvec{V}_{\delta} \cdot \xnab) (R_{\delta} - \Theta_{\delta})  \\
				\quad \, + ((\xvec{V}_{\delta} - \xvec{U}_{\delta}) \cdot \xnab) R_{\delta} + ((\xvec{V}_{\delta} - \xvec{U}_{\delta}) \cdot \xnab) (\Theta_{\delta} - R_{\delta}) + \delta^{-1} (\xvec{V}_{\delta} \cdot \xnab) \widetilde{\xi}.
			\end{gathered}
		\end{equation}
		Now, given any $a \geq 1$, it follows that
		\begin{equation}\label{equation:asymptQWRT}
			\|(Q_{\delta} - W_{\delta})\|_{\xLn{{a}}((0, \delta); \xHn{{m}})}^{a} + \|(R_{\delta} - \Theta_{\delta})\|_{\xLn{{a}}((0, \delta); \xHn{{m+1}})}^{a} = \mathscr{O}(\delta)
		\end{equation}
		as $\delta \longrightarrow 0$; \eg, for the second term this can be seen via
		\begin{multline*}
			\|(R_{\delta} - \Theta_{\delta})\|_{\xLn{{a}}((0, \delta); \xHn{{m+1}})}^{a} \\
			\begin{aligned}
				& = \int_0^{\delta} \|\Theta_0 - \delta^{-1}s \Delta\widetilde{\xi} + \delta^{-1}s \left(\xsym{\Upsilon}\left(W_0 - \frac{\delta^{-1} s \partial_1 \widetilde{\xi}}{2}, \xvec{0}\right) \cdot \xnab \right) \widetilde{\xi}\|_{m+1}^a \, \xdx{s} \\
				& \leq \delta C \left(1 + \|W_0\|_{m}^{2a} + \|\Theta_0\|_{m+1}^a + \| \widetilde{\xi} \|_{m+4}^{2a}\right) .
			\end{aligned}
		\end{multline*}
		The argument is then completed by utilizing \cref{equation:mcrt1,equation:asymptQWRT,equation:mcrt2} to derive energy estimates for~$R_{\delta}$ and~$Q_{\delta}$.  Indeed, one can begin with formally multiplying~\eqref{equation:mcrt1} and~\eqref{equation:mcrt2} by~$(-\Delta)^{m-1} Q_{\delta}$ and~$(-\Delta)^{m} R_{\delta}$ respectively. Then, integration by parts, Poincar\'e's inequality, Sobolev embeddings, Gr\"onwall's inequality, and \eqref{equation:asymptQWRT} imply that~$\|Q_{\delta}(\cdot, \delta)\|_{m-1} \longrightarrow 0$ as~$\delta \longrightarrow 0$. From the viewpoint of estimates, the procedure is similar to that presented in \enquote{Step 2} of the proof for \Cref{theorem:SmallTimeConvergenceToLinearized}. 
	
		\paragraph{Showing \eqref{equation:propvortcontrl1}.} Instead of \eqref{equation:BoussinesqMultiplicativeControl} we consider the time evolution of the modified trajectory
		\begin{align*}
			(W_{\delta}, \Theta_{\delta}) \coloneq S_{\delta}(w_0 + \delta^{-1/2} \widehat{\xi}, \theta_0, \varphi_{\operatorname{ext}}, \psi_{\operatorname{ext}}, \xsym{0}) - (\delta^{-1/2}\widehat{\xi}, 0),
		\end{align*}
		which solves the problem
		\begin{gather*}
			\begin{aligned}
				\partial_t W_{\delta} - \nu \Delta W_{\delta} + \left(\xvec{U}_{\delta} \cdot \xnab\right) W_{\delta} = \varphi_{\operatorname{ext}} + \partial_1 \Theta_{\delta} + \nu \delta^{-1/2} \Delta \widehat{\xi} \\ - \delta^{-1/2} \left(\xvec{U}_{\delta} \cdot \xnab\right) \widehat{\xi}
				- \delta^{-1/2} \left(\xsym{\Upsilon}(\widehat{\xi}, \xsym{0}) \cdot \xnab \right) W_{\delta} - \delta^{-1} \left(\xsym{\Upsilon}(\widehat{\xi}, \xsym{0}) \cdot \xnab \right) \widehat{\xi},
			\end{aligned}\\
			\xwcurl{\xvec{U}_{\delta}} = W_{\delta}, \quad \xdiv{\xvec{U}_{\delta}} = 0, \quad \int_{\mathbb{T}^2} \xvec{U}_{\delta}(\xvec{x}, t) \, \xdx{\xvec{x}} = \xsym{0},\\
			\partial_t \Theta_{\delta} - \tau \Delta \Theta_{\delta} + (\xvec{U}_{\delta} \cdot \xnab) \Theta_{\delta} = \psi_{\operatorname{ext}} - \delta^{-1/2}\left(\xsym{\Upsilon}(\widehat{\xi}, \xsym{0}) \cdot \xnab \right)\Theta_{\delta},\\
			W_{\delta}(\cdot, 0) = w_0, \quad \Theta_{\delta}(\cdot, 0) = \theta_0.
		\end{gather*}
		Then, the convergence in \eqref{equation:propvortcontrl1} follows after showing that the remainder terms
		\begin{equation*}
			\begin{gathered}
				Q_{\delta}(\cdot, t)
				\coloneq W_{\delta}(\cdot, t) - W_0 + \delta^{-1} t \left(\xsym{\Upsilon}(\widehat{\xi},\xsym{0})\cdot\xnab\right)\widehat{\xi} - \delta^{-1/2} t \nu \Delta \widehat{\xi}, \\ 
				R_{\delta}(\cdot, t) \coloneq \Theta_{\delta}(\cdot, t) - \Theta_0
			\end{gathered}
		\end{equation*}
		satisfy $(Q_{\delta}, R_{\delta})(\cdot, \delta) \longrightarrow (0, 0)$ in $\xHn{{m-1}}\times\xHn{{m}}$ as $\delta \longrightarrow 0$.
		Indeed, instead of \eqref{equation:asymptQWRT}, one now has
		\begin{equation*}
			\|(Q_{\delta} - W_{\delta})\|_{\xLn{{a}}((0, \delta); \xHn{{m}})}^{a} + \|(R_{\delta} - \Theta_{\delta})\|_{\xLn{{a}}((0, \delta); \xHn{{m+1}})}^{a} = \mathscr{O}(\delta^{1/2}),
		\end{equation*}
		which suffices in order to derive standard energy estimates for $Q_{\delta}$ and $R_{\delta}$ as explained in the proof of \Cref{theorem:SmallTimeConvergenceToLinearized}.
	\end{proof}
	
	We also need the below auxiliary result which can be proved by elementary trigonometric calculations.
	\begin{lmm}\label{lemma:trigo}
		Let $\mathcal{E}$ be the collection of $\sin(\xvec{x}\cdot \xvec{n})$ and $\cos(\xvec{x}\cdot \xvec{n})$ with $\xvec{n}\in \mathbb{N}\times(\mathbb{N}\cup\{0\})$. Then, the set
		\[
			\mathcal{H} \coloneq \left\{ \xi_0 + \left(\xsym{\Upsilon}(\xi_1, \xsym{0}) \cdot \xnab\right) \xi_1 + \left(\xsym{\Upsilon}(\xi_2, \xsym{0}) \cdot \xnab\right) \xi_2 \, \, | \, \, \xi_0,\xi_1,\xi_2\in \operatorname{span}_{\mathbb{R}}\mathcal{E} \right\}
		\]
		contains $\pm\sin(\xvec{x}\cdot \xvec{n})$ and $\pm\cos(\xvec{x}\cdot \xvec{n})$ for all nonzero $\xvec{n}\in \mathbb{Z}^2$.
	\end{lmm}
	\begin{proof}
		The idea is to utilize the representations $\xsym{\Upsilon}(\sin(\xvec{n}\cdot \xvec{x}), \xsym{0}) = \xvec{n}^{\perp}|\xvec{n}|^{-2} \cos(\xvec{n}\cdot \xvec{x})$ and $\xsym{\Upsilon}(\cos(\xvec{n}\cdot \xvec{x}), \xsym{0}) = -\xvec{n}^{\perp}|\xvec{n}|^{-2} \sin(\xvec{n}\cdot \xvec{x})$, and then to express $\sin(\xvec{x}\cdot \xvec{n})$ and $\cos(\xvec{x}\cdot \xvec{n})$ via trigonometric angle identities as elements of $\mathcal{H}$. Such arguments are described, \eg, in \cite{AgrachevSarychev2006}. 
	\end{proof}
	
	The following corollary provides the structure of finitely decomposable controls that arise from a certain combination of the results from Theorems~\Rref{theorem:main_smalltimelargecontrol} and \Rref{theorem:propvortcontrl}.
	\begin{crllr}\label{Corollary:st}
		Let~$m \geq 2$, ${T} > 0$, $\xi = \partial_1 \kappa$ for some $\kappa \in \xCinfty(\mathbb{T}^2; \mathbb{R})$, $(w_0, \theta_0) \in \xHn{{m}} \times \xHn{{m+1}}$, and~$(\varphi_{\operatorname{ext}}, \psi_{\operatorname{ext}}) \in \xLtwo((0, {T}); \xHn{{m-2}} \times \xHn{{m-1}})$. There exist control parameters $(\widetilde{\gamma}_{\delta}, \widetilde{\gamma}_{\delta,1}, \dots, \widetilde{\gamma}_{\delta,6})_{\delta > 0} \subset \xLtwo((0, T); \mathbb{R})$ and $\widetilde{\xsym{\aleph}}_{\delta}\in\xCinfty_0((0,T);\mathbb{R})$ such that one has in $\xHn{{m-1}}$ the convergence
	\begin{multline}\label{equation:corcon}
		\Pi_1S_{\delta}\left(w_0, \theta_0, \varphi_{\operatorname{ext}}(T-\delta+\cdot), \psi_{\operatorname{ext}}(T-\delta+\cdot) + \widetilde{\eta}_{\delta}, \widetilde{\xsym{\aleph}_{\delta}}\right) |_{t=\delta} - \delta^{-1/2} \xi \\ \longrightarrow w_0 - (\xsym{\Upsilon}(\xi, \xsym{0}) \cdot \xnab) \xi \mbox{ as }  \delta \longrightarrow 0,
	\end{multline}
	where $\widetilde{\eta}_{\delta}(\xvec{x}, t) \coloneq \sum_{l = 1}^{6} \widetilde{\gamma}_{\delta,l}(t) \zeta_l(\xvec{x}, \widetilde{\gamma}_{\delta}(t))$.
	\end{crllr}
	\begin{proof}
		The argument consists of the following three parts; see \eqref{equation:defsp} for the precise definition of the coefficients in the representation of $\widetilde{\eta}_{\delta}$. 1) Let $\varepsilon > 0$ be arbitrary. By \Cref{theorem:propvortcontrl}, one can take $0 < \delta_3 < T$ so small that the vorticity component of the left-hand side in \eqref{equation:propvortcontrl1}, with $\delta = \delta_3$, $(\varphi_{\operatorname{ext}}, \psi_{\operatorname{ext}})$ replaced by $(\varphi_{\operatorname{ext}}, \psi_{\operatorname{ext}})(T-\delta_3 + \cdot)$, and $\widehat{\xi} = \xi$, belongs to the $\varepsilon$-neighborhood of $w_0 - (\xsym{\Upsilon}(\xi, \xsym{0}) \cdot \xnab) \xi$ in $\xHn{{m-1}}$. The temperature component it not relevant, but could be brought to the $\varepsilon$-neighborhood of $\theta_0$ in $\xH^m$. Hereby, it is used that the limit in \Cref{theorem:main_smalltimelargecontrol} is uniform with respect to $(\varphi_{\operatorname{ext}}, \psi_{\operatorname{ext}})$ from bounded subsets of $\xLtwo((0,{T}); \xHn{{m-2}} \times \xHn{{m-1}})$, and that the family $((\varphi_{\operatorname{ext}}, \psi_{\operatorname{ext}})(s + \cdot))_{s \in (0,T)}$ is bounded in $\xLtwo((0,T); \xHn{{m-2}}\times\xHn{{m-1}})$ if one employs the extension $(\varphi_{\operatorname{ext}}, \psi_{\operatorname{ext}})(s) \coloneq (0,0)$ for $s \in \mathbb{R}\setminus[0,1]$. 2) By the continuity of the resolving operator for \eqref{equation:GeneralBoussCurl} from \Cref{proposition:Wellposedness}, there exists a number $\widetilde{\varepsilon} > 0$ such that the goal of the previous step is still achieved with the same value of $\delta_3$ if one replaces the initial vorticity in the application of \eqref{equation:propvortcontrl1} from the previous step by any other element of $\xHn{m}$ belonging to the $\widetilde{\varepsilon}$-neighborhood of $w_0 + \delta_3^{-1/2}\xi$ in $\xHn{{m-1}}$. Thus, via \Cref{theorem:propvortcontrl}, one can choose $0 < \delta_2 = \delta_{2,1} + \delta_{2,2} < T - \delta_3$ so small that the left-hand side in \eqref{equation:propvortcontrl0} with $\delta = \delta_{2,1}$ and $\smash{\widetilde{\xi} = -\delta_3^{-1/2}\kappa}$ belongs to the $\widetilde{\varepsilon}$-neighborhood of $w_0 + \delta_3^{-1/2}\xi$ in $\xHn{{m-1}}$. Using also \Cref{theorem:main_smalltimelargecontrol}, one could correct the temperature in time $\delta_{2,2}$ as close as desired towards $\theta_0$ in $\xHn{m}$, while keeping the vorticity in the $\widetilde{\varepsilon}$-neighborhood of $w_0 + \delta_3^{-1/2}\xi$ with respect to $\xHn{{m-1}}$. Regarding this step, when applying \Cref{theorem:propvortcontrl}, $(\varphi_{\operatorname{ext}}, \psi_{\operatorname{ext}})$ in \eqref{equation:propvortcontrl0} is replaced by $(\varphi_{\operatorname{ext}}, \psi_{\operatorname{ext}})(T-\delta_2-\delta_3 + \cdot)$.
		3) By applying \Cref{theorem:main_smalltimelargecontrol}, one can determine $0 < \delta_1 < T - \delta_2 - \delta_3$ so small that the corresponding trajectory in \Cref{theorem:main_smalltimelargecontrol} with $(\varphi_{\operatorname{ext}}, \psi_{\operatorname{ext}})$ replaced by $(\varphi_{\operatorname{ext}}, \psi_{\operatorname{ext}})(T-\delta_1-\delta_2-\delta_3 + \cdot)$ comes at $t = \delta_1$ as close to $(w_0, \theta_0 + \delta_{2,1}^{-1}\delta_3^{-1/2}\kappa)$ in $\xHn{{m-1}}\times\xHn{m}$ as desired. Thus, again by the continuity of the resolving operator for \eqref{equation:GeneralBoussCurl}, the choice of $\delta_1$ can be made so small that the goal of the previous step is still satisfied with the same value of $\delta_{2,1}$ if one takes the trajectory from the present step evaluated at $t = \delta_1$ as the initial condition.
		
		In summary, by utilizing \Cref{theorem:main_smalltimelargecontrol} and \Cref{theorem:propvortcontrl} as described above, we choose for any $\delta > 0$ the numbers $\delta_{1}, \delta_{2} = \delta_{2,1} + \delta_{2,2}, \delta_{3} \in (0, \delta/3)$, parameters $({\alpha}_{\delta_1,l})_{l \in \{1,\dots,6\}}, ({\alpha}_{\delta_{2,2},l})_{l \in \{1,\dots,6\}} \subset \xLtwo((0, 1); \mathbb{R})$, and
		\begin{equation}\label{equation:defsp}
			\begin{gathered}
				\widetilde{\gamma}_{\delta}(t) \coloneq \mathbb{I}_{(0, \delta_1)}(t) \delta_1^{-1}t + \mathbb{I}_{(\delta_1+\delta_{2,1}, \delta_1+\delta_2)}(t)\delta_{2,2}^{-1}(t-\delta_1-\delta_{2,1}), \\
				\widetilde{\gamma}_{\delta, l}(t) \coloneq \mathbb{I}_{(0, \delta_1)}(t) \delta_{1}^{-2}{\alpha}_{\delta_1,l}(\delta_1^{-1}t) + \mathbb{I}_{(\delta_1+\delta_{2,1}, \delta_1+\delta_2)}(t) \delta_{2,2}^{-2}{\alpha}_{\delta_{2,2},l}(\delta_{2,2}^{-1}(t-\delta_1-\delta_{2,1})), \\ \widetilde{\xsym{\aleph}}_{\delta}(t) \coloneq \mathbb{I}_{(0, \delta_1)} \overline{\xvec{y}}_{\delta_1}(t) + \mathbb{I}_{(\delta_1+\delta_{2,1}, \delta_1+\delta_2)}(t)\overline{\xvec{y}}_{\delta_{2,2}}(t-\delta_1-\delta_{2,1}) 
			\end{gathered}
		\end{equation}
		such that \eqref{equation:corcon} holds. From \eqref{equation:ctrlstrcaux} and \eqref{equation:defsp}, one has $(x,t) \mapsto \zeta_l(\xvec{x}, \widetilde{\gamma}_{\delta}(t)) \in \xLtwo((0,T); \xCinfty(\mathbb{T}^2;\mathbb{R}))$.
	\end{proof}

	The next theorem provides the combined global approximate controllability of the vorticity and the temperature in any given time. 
	
	\begin{thrm}\label{theorem:control_vorttemp} 
		Assume that $m \geq 2$,  $T > 0$,~$\varepsilon > 0$, $(w_0, \theta_0), (w_T, \theta_T) \in \xHn{{m-1}} \times \xHn{{m}}$, and~$(\varphi_{\operatorname{ext}}, \psi_{\operatorname{ext}}) \in \xLtwo((0,T); \xHn{{m-1}} \times \xHn{{m}})$. There exist $\gamma, \gamma_1, \dots, \gamma_{6} \in \xLtwo((0, T); \mathbb{R})$ such that the corresponding solution $(w, \xvec{u}, \theta)$ to \eqref{equation:BoussinesqVorticity}, with control $\eta$ as in \eqref{equation:degeneratecontrol}, has the regularity $(w, \theta) \in \mathcal{X}_{T}^m$ with $\smallint_{\mathbb{T}^2} \xvec{u}(\xvec{x}, \cdot) \, \xdx{\xvec{x}} \in \xCinfty_0((0,T);\mathbb{R}^2)$ and satisfies
		\begin{equation}\label{equation:rcond_vort_temp_thrm2}
			\|w(\cdot, T) - w_T\|_{m-1} + \|\theta(\cdot, T) - \theta_T\|_{m} < \frac{\varepsilon}{C_0}
		\end{equation}
		for the constant $C_0 > 0$ from \eqref{equation:velvortest}.
	\end{thrm}
	\begin{proof}
		In the following steps, the time interval $[0,T]$ is subdivided into several parts on which different control strategies are employed. At first, we note that standard estimates for the vorticity-temperature system~\eqref{equation:BoussinesqVorticity}, there exists a small time $\widetilde{\delta}_0 > 0$ such that the implication
		\[
		\|a - w_T\|_{m-1} + \|b - \theta_T\|_{m} < \frac{2\varepsilon}{3C_0} \quad \Longrightarrow \quad \|w^{\delta} - w_T\|_{m-1} + \|\theta^{\delta} - \theta_T\|_{m} < \frac{\varepsilon}{C_0}
		\]
		is true for all pairs
		\[
		(w^{\delta}, \theta^{\delta}) \coloneq S_{\delta}(a,b,\varphi_{\operatorname{ext}}(T-\delta+\cdot), \psi_{\operatorname{ext}}(T-\delta+\cdot), \xsym{0}) |_{t = \delta}, \quad 0 < \delta \leq \widetilde{\delta}_0,
		\]
		and where~$\widetilde{\delta}_0$ depends on $\varepsilon$,~$C_0$, $w_T$,~$\theta_T$, and~$(\varphi_{\operatorname{ext}}, \psi_{\operatorname{ext}})$. 
		
		\paragraph{Step 1. Starting with controls switched off.}  During the time interval $[0, T-\widetilde{\delta}_0]$, no controls will be applied. Let us denote the state of the uncontrolled trajectory at the time $t = T - \delta_0$ by
		\[
			(\widetilde{w}_0, \widetilde{\theta}_0) \coloneq S_{T - \delta_0}(w_0, \theta_0,\varphi_{\operatorname{ext}}, \psi_{\operatorname{ext}}, \xsym{0}) |_{t = T - \delta_0} \in \xHn{{m+1}} \times \xHn{{m+2}},
		\]
		where~$0 < \delta_0 \leq \widetilde{\delta}_0$ is arbitrarily selected outside a temporal set of zero measure on which the desired regularity $(\widetilde{w}_0, \widetilde{\theta}_0) \in \xHn{{m+1}} \times \xHn{{m+2}}$ is unknown. This is possible due to the assumption $(\varphi_{\operatorname{ext}}, \psi_{\operatorname{ext}}) \in \xLtwo((0,T); \xHn{{m-1}} \times \xHn{{m}})$, as it follows then from the parabolic smoothing effects of \eqref{equation:BoussinesqVorticity} (\cf~\cite{Temam2001}) that 
		\[
			S_{T}(w_0, \theta_0,\varphi_{\operatorname{ext}}, \psi_{\operatorname{ext}}, \xsym{0}) |_{(a, T)} \in \xLtwo((a, T); \xHn{{m+1}} \times \xHn{{m+2}})
		\]
		for arbitrarily fixed $0 < a < T$.
		
		\paragraph{Step 2. Warming up to special intermediate temperature profiles.}  
		Owing to \Cref{lemma:trigo}, one can choose an integer $L \geq 0$ and profiles~$\xi_0, \xi_1, \dots, \xi_{2L} \in \xCinfty(\mathbb{T}^2; \mathbb{R})$ with zero averages such that
		\begin{equation}\label{equation:epoversi1}
			\| V - w_T \|_{m-1} < \frac{\varepsilon}{6C_0}, \quad V \coloneq \widetilde{w}_0 - \partial_1\xi_0 - \sum_{i=1}^{2L}(\xsym{\Upsilon}(\xi_i, \xsym{0}) \cdot \xnab) \xi_i,
		\end{equation}
		where $\xi_i = \partial_1 \kappa_i$ for some $\kappa_i \in \xCinfty(\mathbb{T}^2; \mathbb{R})$ and $i \in \{1,\dots, 2L\}$. Now, starting at the time $t = T-\delta_0$ from the state $(\widetilde{w}_0, \widetilde{\theta}_0)$, one can use $2L$ iterated applications of \Cref{Corollary:st} to determine a finitely decomposable control for which the vorticity component of the associated controlled trajectory to \eqref{equation:GeneralBoussCurl} approximately reaches in $\xHn{{m-1}$} (in sufficiently small time) a vorticity state of the form
		\[
			\widetilde{w}_0 - \sum_{i=1}^{2L}(\xsym{\Upsilon}(\xi_i, \xsym{0}) \cdot \xnab) \xi_i + \mbox{\enquote{$2L$ large residuals}}.
		\]
		These residuals arise from the term \enquote{$\delta^{-1/2}\xi = \delta^{-1/2}\partial_1 \kappa$} in \eqref{equation:corcon}. In particular, this strategy gives rise to small numbers $\widetilde{\delta}_1, \dots, \widetilde{\delta}_{2L}$ with $\sum_{i=1}^{2L} \widetilde{\delta}_i < \delta_0/2$ such that the above mentioned control is on each time interval
		\[
			\left[T-\delta_0+\sum_{i=1}^{l} \widetilde{\delta}_i, T-\delta_0 + \sum_{i=1}^{l+1} \widetilde{\delta}_i\right], \quad l \in \{0,\dots, 2L-1\},
		\]
		defined by means of the description in \eqref{equation:defsp} for the finitely decomposable control provided by \Cref{Corollary:st}.
		Then, resorting once more to \Cref{theorem:main_smalltimelargecontrol}, a vorticity state of the form
		\begin{equation}\label{equation:form}
			\widetilde{w}_0 - \partial_1 \xi_0 - \sum_{i=1}^{2L}(\xsym{\Upsilon}(\xi_i, \xsym{0}) \cdot \xnab) \xi_i + \mbox{\enquote{$2L$ large residuals}}
		\end{equation}
		can be approximately reached in $\xHn{{m-1}}$ (in sufficiently small time) by the vorticity component of a trajectory to \eqref{equation:GeneralBoussCurl} with finitely decomposable control. Finally, to approximately reach $V$ in $\xHn{{m-1}}$ with a controlled trajectory that starts with the initial vorticity given in \eqref{equation:form}, one has to approximately cancel the aforementioned $2L$ residuals in the $\xHn{{m-1}}$-norm (in sufficiently small time). To this end, as these residuals are of the type $\widehat{\delta}^{-1/2}\partial_1 \kappa$ for different choices of $\widehat{\delta} > 0$, one can employ the limit \eqref{equation:propvortcontrl0} established in \Cref{theorem:propvortcontrl}. 
		
		Summarizing, in a manner similar to the derivation of \eqref{equation:defsp} for a single combination of Theorems~\Rref{theorem:main_smalltimelargecontrol} and~\Rref{theorem:propvortcontrl}, one obtains via the above-described steps a number $\delta_1 < \delta_0/2$, parameters $(\widetilde{\gamma}, \widetilde{\gamma}_{1}, \dots, \widetilde{\gamma}_{6})_{\delta_1 > 0} \in \xLtwo((0, T); \mathbb{R})$, and $\widetilde{\xsym{\aleph}}\in\xCinfty_0((0,T);\mathbb{R})$ such that
		\begin{equation*}
				(\widetilde{w}, \widetilde{\theta}) \coloneq S\left(\widetilde{w}_0, \widetilde{\theta}_0, \varphi_{\operatorname{ext}}(T - \delta_0+\cdot), \psi_{\operatorname{ext}}(T - \delta_0+\cdot) + \widetilde{\eta}, \widetilde{\xsym{\aleph}}\right) |_{t=\delta_1}
		\end{equation*}
		satisfies
		\begin{equation}\label{equation:epoversi2}
			(\widehat{w}_0,\widehat{\theta}_0) \coloneq (\widetilde{w}, \widetilde{\theta})(\cdot, \delta_1)  \in \xHn{{m}} \times \xHn{{m+1}}, \quad \| \widetilde{w}(\cdot, \delta_1) - V \|_{m-1} < \frac{\varepsilon}{6C_0},
		\end{equation}
		where
		\[
			\widetilde{\eta}(\xvec{x}, t) \coloneq \sum_{l = 1}^{6} 	\widetilde{\gamma}_{l}(t) \zeta_l(\xvec{x}, \widetilde{\gamma}(t)).
		\]
		The regularity condition in \eqref{equation:epoversi2} holds due to $(\varphi_{\operatorname{ext}}, \psi_{\operatorname{ext}}) \in \xLtwo((0,T); \xHn{{m-1}} \times \xHn{{m}})$ and the parabolic smoothing effects.

		\paragraph{Step 3. Cooling down to the final target temperature.} Another application of \Cref{theorem:main_smalltimelargecontrol}, with any target temperature $\widehat{\theta}_T \in \xHn{{m+1}}$ satisfying $\|\widehat{\theta}_T - \theta_T\|_m < \varepsilon/6C_0$, provides $0 < \delta_2 < \delta_0/2$ and control parameters
		\[
			(\beta_{1}, \dots, \beta_{6}) \subset \xLtwo((0,1); \mathbb{R})
		\]
		for which the trajectory
		\[
			(\widehat{w}, \widehat{\theta}) \! \coloneq \! S_{\delta_2}(\widehat{w}_0, \widehat{\theta}_0, \varphi_{\operatorname{ext}}(T-\delta_0+\delta_1+\cdot), \psi_{\operatorname{ext}}(T-\delta_0+\delta_1+\cdot) + \widehat{\eta}, \overline{\xvec{y}}_{\delta_2})
		\]
		driven by $\widehat{\eta}(\cdot, t) \coloneq  \delta_2^{-1}\sum_{l = 1}^{6} \beta_{l}(\delta_2^{-1}t) \zeta_l(\cdot, \delta_2^{-1} t)$ for $0 \leq t \leq \delta_2$ satisfies
		\[
			\|\widehat{w}(\cdot, \delta_2) - \widehat{w}_0\|_{m-1} + \|\widehat{\theta}(\cdot, \delta_2) - \widehat{\theta}_T\|_{m} < \frac{\varepsilon}{3C_0}.
		\]
		In view of \eqref{equation:epoversi1} and \eqref{equation:epoversi2}, it follows that
		\[
			\|\widehat{w}(\cdot, \delta_2) - w_T\|_{m-1} + \|\widehat{\theta}(\cdot, \delta_2) - \theta_T\|_{m} < \frac{2\varepsilon}{3C_0}.
		\]
		
		\paragraph{Step 4. Summary.} By concatenating the trajectories and controls that are obtained above on different intervals of $[0,T]$, one obtains the control parameters $\gamma, \gamma_1, \dots, \gamma_{6} \in \xLtwo((0, T); \mathbb{R})$ and a respectively controlled trajectory $(w, \xvec{u}, \theta)$ having the desired properties. In particular, since $\overline{\xvec{y}}$ from \Cref{theorem:convection} is compactly supported in $(0, 1)$, it follows that $\smallint_{\mathbb{T}^2} \xvec{u}(\xvec{x}, \cdot) \, \xdx{\xvec{x}} \in \xCinfty_0((0,T);\mathbb{R}^2)$; and in fact, the first component of the velocity average vanishes: $\smallint_{\mathbb{T}^2} u_1(\xvec{x}, \cdot) \, \xdx{\xvec{x}} = 0$.
	\end{proof}

	\subsection{Conclusion of Theorems~\Rref{theorem:main} and~\Rref{theorem:secondmain}}\label{subsection:conclusion}
	Let $r \geq 2$, $T > 0$, and $\varepsilon > 0$ be as in Theorems~\Rref{theorem:main} and~\Rref{theorem:secondmain}. An application of \Cref{theorem:control_vorttemp} with $m = r$ and $(w_0, w_T) \coloneq (\xwcurl{\xvec{u}_0}, \xwcurl{\xvec{u}_T})$ provides a solution $(\xvec{u}, w, \theta)$ to \eqref{equation:BoussinesqVorticity} that satisfies~\eqref{equation:rcond_vort_temp_thrm2}, and which is driven by a control $\eta$ of the form
	\[
		\eta(\xvec{x}, t) = \sum_{l = 1}^{6} \gamma_l(t) \zeta_l(\xvec{x}, \gamma(t)).
	\]
	However, by the proof of \Cref{theorem:control_vorttemp}, the velocity $\xvec{u}(\cdot, t)$ has nonzero average for some $t \in (0,T)$, while the temperature $\theta$ obtained via~\Cref{theorem:control_vorttemp} has zero average for all $t \in (0,T)$; see also~\eqref{equation:prevy}. As a consequence, $(\xvec{u}, \theta)$ fails to satisfy the velocity-temperature formulation \eqref{equation:BoussinesqVelocity}. In order to still pass to the velocity-temperature formulation, we will now either add a velocity control, or modify~$\theta$ appropriately. Hereto, let us denote by~$\xsym{\aleph}\in\xCinfty_0((0,T);\mathbb{R})$ the function determined by $\xsym{\aleph}\xsym{e}_{\operatorname{grav}} = \smallint_{\mathbb{T}^2} \xvec{u}(\xvec{x}, \cdot) \, \xdx{\xvec{x}} \in \xCinfty_0((0,T);\mathbb{R}^2)$.
	
	\paragraph{Option 1. Using only a localized temperature control.} Let $\chi$ be the cutoff from \eqref{equation:Definition_chi} with $\operatorname{supp}(\chi) \subset \omegaup$. Instead of~$\theta$, we consider now the function $\widetilde{\theta}(\xvec{x}, t) \coloneq \theta(\xvec{x}, t) + \chi(x_2)\aleph'(t)$,
	which satisfies (\cf~\Cref{theorem:control_vorttemp})
	\[
		\widetilde{\theta}(\cdot, 0) = 	\theta(\cdot, 0), \quad \widetilde{\theta}(\cdot, T) = 	\theta(\cdot, T).
	\]
	Allowing a slight abuse of notation, we use again the symbol $\theta$ for $\widetilde{\theta}$. Then, the triple~$(\xvec{u}, w, \theta)$ satisfies~\eqref{equation:rcond_vort_temp_thrm2} and solves~\eqref{equation:BoussinesqVelocity} with a control~$\eta$ of the form
	\begin{equation*}
		\eta(\xvec{x}) = \chi(x_2)\aleph''(t) - \tau \chi''(x_2)\aleph'(t) + u_2(\xvec{x},t) \chi'(x_2)\aleph'(t) + \sum_{l = 1}^{6} \gamma_l(t) \zeta_l(\xvec{x}, \gamma(t)),
	\end{equation*}
	which satisfies~$\operatorname{supp}(\eta) \subset \omegaup$. As we made the choices $\zeta_1 = \chi$ and~$\zeta_2 = \chi'$ in the proof of~\Cref{theorem:localized} (below \eqref{equation:ctrlstrcaux}), the proof of~\Cref{theorem:secondmain} with control of the form \eqref{equation:degeneratecontrol2} is now complete. This also implies \Cref{theorem:main}, as the continuous dependence on the data of solutions to \eqref{equation:BoussinesqVelocity} allows to approximate the degenerate control from~\Cref{theorem:secondmain} by a smooth one.

	\begin{rmrk}\label{remark:nlc}
		As mentioned in \Cref{remark:sidenote}, one could also correct the temperature average by adding~$\aleph'(t)$ to the function~$\theta$ obtained from \Cref{theorem:control_vorttemp}. While this would allow to merely employ a finitely decomposable temperature control, there would appear one term that is not physically localized in~$\omegaup$. Indeed, the term $\mathbb{I}_{\omegaup}\eta(\xvec{x}, t)$ in \eqref{equation:BoussinesqVelocity} would be replaced by $\aleph''(t) + \mathbb{I}_{\omegaup}\sum_{l = 1}^{6} \gamma_l(t) \zeta_l(\xvec{x}, \gamma(t))$, where $\aleph''(t)$ is for each $t$ a constant function with respect to the space variables.
	\end{rmrk}
	\paragraph{Option 2. One-dimensional control in the second velocity component.} The triple~$(\xvec{u}, w, \theta)$ obtained via~\Cref{theorem:control_vorttemp} satisfies the controllability condition~\eqref{equation:rcond_vort_temp_thrm2} and solves
	\begin{gather*}
		\partial_t \xvec{u} - \nu \Delta \xvec{u} + \left(\xvec{u} \cdot \xnab\right) \xvec{u} + \xnab p = (\theta + \mathbb{I}_{\omegaup} \overline{\eta}) \xvec{e}_{\operatorname{grav}} + \xsym{\Phi}_{\operatorname{ext}}, \quad \xdiv{\xvec{u}} = 0, \quad
		w(\cdot, 0) = w_0,\\
		\partial_t \theta - \tau\Delta \theta + (\xvec{u} \cdot \xnab) \theta = \mathbb{I}_{\omegaup} \left(\sum_{l = 1}^{6} \gamma_l(t) \zeta_l(\xvec{x}, \gamma(t))\right) + \psi_{\operatorname{ext}}, \quad \theta(\cdot, 0) = \theta_0,
	\end{gather*}
	where $\overline{\eta}(\xvec{x}, t) \coloneq \aleph'(t) \chi(x_2)$. We emphasize again that $\chi$ from \eqref{equation:Definition_chi} is a universal profile which only depends on $\omegaup$. Recalling the choice $\zeta_1 = \chi$ made in the proof of~\Cref{theorem:localized} (below \eqref{equation:ctrlstrcaux}), we have~$\overline{\eta}(\xvec{x}, t) = \overline{\gamma}(t)\zeta_1(x_2)$, where $\overline{\gamma}(t) \coloneq \aleph'(t)$.
	This completes the proof of \Cref{theorem:secondmain}. 
	\paragraph{Conclusion.}
	Since $\xvec{x} \mapsto \chi(x_2)\xsym{e}_{\operatorname{grav}}$ is curl-free, all above-listed options for the controls ensure together with the relations in \eqref{equation:velvortest} and \eqref{equation:rcond_vort_temp_thrm2} the target condition
	\[
		\|\xvec{u}(\cdot, T) - \xvec{u}_T\|_{r} + \|\theta(\cdot, T) - \theta_T\|_{r} \leq C_0 \|w(\cdot, T) - w_T\|_{r-1} + \|\theta(\cdot, T) - \theta_T\|_{r} < \varepsilon.
	\]
	
	\paragraph{Acknowledgment.}
	We would like to thank the anonymous reviewers for their constructive remarks and suggestions.
	
	\paragraph{Data availability statement.}
	We do not analyze or generate any datasets, because our work proceeds within a theoretical and mathematical approach.
	
	\paragraph{Version of Record.}
	This version of the article has been accepted for publication but is
	not the Version of Record and does not reflect post-acceptance improvements, or any corrections. The Version of Record is available online at: \url{https://doi.org/10.1007/s00205-025-02128-6}. Use of this Accepted Version is subject to the publisher’s Accepted Manuscript terms of use 
	
	\noindent\allowdisplaybreaks\url{https://www.springernature.com/gp/open-research/policies/accepted-manuscript-terms}
	
	\addcontentsline{toc}{section}{References}
	
	\bibliographystyle{alpha}
	\bibliography{BibBoussinesq}
  	
\end{document}